\numberwithin{equation}{section}
\newcommand{\Real}{\mathbb R}
\newcommand{\N}{\mathbb N}
\newcommand{\C}{\mathbb C}
\renewcommand\Re{\operatorname{Re}}
\renewcommand\Im{\operatorname{Im}}
\newtheorem{theorem}{Theorem}[section]
\newtheorem{proposition}{Proposition}[section]
\newtheorem{remark}{Remark}[section]
\newtheorem{corollary}{Corollary}[section]
\begin{document}

\title[Laguerre's method applied to find roots of unity]
{Analysis of Laguerre's method applied to find the roots of unity}

\author{Pavel B\v{e}l\'{\i}k}
\author{HeeChan Kang}
\author{Andrew Walsh}
\author{Emma Winegar}

\address{P.~B\v{e}l\'{\i}k\\
Mathematics Department\\
Augsburg College\\
2211 Riverside Avenue\\
Minneapolis, MN 55454\\
U.S.A.}
\email{belik@augsburg.edu}

\address{H.~Kang\\
Augsburg College\\
2211 Riverside Avenue\\
Minneapolis, MN 55454\\
U.S.A.}
\email{kang@augsburg.edu}

\address{A.~Walsh\\
Augsburg College\\
2211 Riverside Avenue\\
Minneapolis, MN 55454\\
U.S.A.}
\email{walsha@augsburg.edu}

\address{E.~Winegar\\
Augsburg College\\
2211 Riverside Avenue\\
Minneapolis, MN 55454\\
U.S.A.}
\email{winegar@augsburg.edu}

\thanks{
  This work was supported in part by the National Science Foundation Grant DMS-0802959 and in part by The Office of Undergraduate Research and Graduate Opportunity at Augsburg College.
}

\keywords{Iterative methods, Laguerre's method, roots of unity, basins of attraction, fractal boundary}

\subjclass[2010]{65H04, 65Y20, 68W40}

\date{\today}

\begin{abstract}
Previous analyses of Laguerre's method have provided results on the convergence and properties of this popular method when applied to the polynomials $p_n(z)=z^n-1$, $n\in\N$ \cite{ray66,curryfiedler88,drakopoulos03}. While these analyses appear to provide a fairly complete picture, careful study of the results reveals that more can be said. We provide additional analytical, computational, and graphical results, details, and insights. We raise and summarize questions that still need to be answered.
\end{abstract}

\maketitle{\allowdisplaybreaks\thispagestyle{empty}

\section{Introduction}
Laguerre's method for approximating roots of polynomials \cite{laguerre} is one of the least understood methods of numerical analysis. It exhibits cubic convergence to simple roots of (complex) polynomials and linear convergence to multiple roots, thus outperforming the well-known Newton's method that exhibits quadratic convergence to simple roots \cite{parlett64}, or even the widely used, and globally convergent, Jenkins--Traub method, which has the order of convergence of (at least) $1+\phi$, where $\phi=(1+\sqrt{5})/2$ is the golden ratio \cite{ralstonrabinowitz01,ford77}.
Perhaps due to the lack of complete understanding of Laguerre's method, it is often overlooked in designing professional software. However, some of the known results make it an excellent candidate in many situations. For example, it is known that the method exhibits global convergence (convergence from any initial guess) for real polynomials with real roots \cite{bodewig46,ralstonrabinowitz01}. It also allows for automatic switching to the complex domain if there are no real roots; this is due to the appearance of a square root in the definition of the method (see \eqref{eq:laguerre} in the next section). In general, although convergence is not guaranteed for all complex starting values, the method seems to perform very well in many cases \cite{parlett64}.

It is the goal of this paper to provide additional insights into the performance of Laguerre's method when applied to simple polynomials of the form $z^n-1$. We primarily follow the work of Ray \cite{ray66} and Curry and Fiedler \cite{curryfiedler88}, but provide additional details and clear proofs of all results. In addition, we provide computational results that demonstrate the poor performance of the method when $n$ is large and exact arithmetic is used. This is due to the fact that the region of convergence to the roots is contained in an annulus that shrinks towards the unit circle $S^1$ as $n$ increases. Points in the complement of the annulus converge to a two-cycle consisting of $\{0,\infty\}$. We also show that the boundary of the region of convergence has fractal characteristics and becomes quite interesting for large $n$. Finally, we demonstrate on some examples that in floating-point arithmetic the method in its general formulation \eqref{eq:laguerre} eventually converges from seemingly any initial complex value due to the loss of significance. This thus ironically contributes to the practicality of the method in this case, and it remains to be seen whether this is also the case for general polynomials.

We organize the paper similarly as in \cite{curryfiedler88}. In section \ref{sec:laguerre}, we introduce the method, briefly summarize known results, and provide a simpler expression for the method when applied to the polynomials $p_n(z)=z^n-1$. In section \ref{sec:symmetries}, we formulate and prove three propositions regarding the symmetries of the method applied to $p_n$ that simplify the analysis in the following sections. Regions in the complex plane that will play a significant role in the study of the dynamics are defined in section \ref{sec:regions}, and their boundary curves are algebraically characterized in section \ref{sec:char_fun}. The dynamics of the method on the unit circle and in the neighborhood of the two-cycle $\{0,\infty\}$ is studied in sections \ref{sec:unit_circle} and \ref{sec:two_cycle}, respectively. In section \ref{sec:convergence} we provide proofs of convergence to the roots of unity when the initial guess is in a relevant annulus containing the unit circle. The boundary of the region of convergence is contained in two annuli shown as the ``gray areas'' in Fig.~\ref{fig:regions}, and some relevant numerical results pertinent to the boundary are shown in section \ref{sec:boundary}. We conclude with section \ref{sec:conclusions}, in which we summarize some of the open questions, and demonstrate the ``convergence'' of the method even from the basins of attraction of the two-cycle $\{0,\infty\}$.

\section{Laguerre's method}
\label{sec:laguerre}
In this section, we provide the basic details of Laguerre's method, mention known results, and apply the method to the polynomials $p_n(z)=z^n-1$ with $n\ge2$ and $z\in\C$. We will denote by $z^{1/2}$ the set of the two solutions $\{w,-w\}\subset\C$ such that $w^2=z$ (unless, of course, $z=0$, in which case $w=0$). We will use the notation $\sqrt{z}$ for the principal square root of $z$; i.e., if $z=re^{i\theta}$ with $r>0$ and $-\pi<\theta\le\pi$, then $\sqrt{z}=\sqrt{r}\,e^{i\theta/2}$.

Laguerre's method for complex polynomials $p(z)$ of degree $n\ge2$ is defined as \cite{laguerre,ralstonrabinowitz01}
\begin{equation}
  \label{eq:method}
  z_{k+1}
  =
  L(z_k)
  \quad
  (z_0\in\C\text{ given}),
\end{equation}
where $L(z)$ denotes the {\it Laguerre iteration function} \cite{henrici74} given by
\begin{equation}
  \label{eq:laguerre}
  \begin{split}
    L(z)
    &=
    z
    -
    \frac{np(z)}{p'(z)\pm\sqrt{(n-1)^2\left(p'(z)\right)^2-n(n-1)p(z)p''(z)}}\\
    &=
    z-\frac{n}{G(z)\pm\sqrt{(n-1)(nH(z)-G^2(z))}},
  \end{split}
\end{equation}
where
\begin{equation*}
  G(z)
  =
  \frac{p'(z)}{p(z)}
  \quad\text{and}\quad
  H(z)
  =
  G^2(z)-\frac{p''(z)}{p(z)},
\end{equation*}
and where the sign is chosen so as to maximize the modulus of the denominators.

\subsection{Known Results}
It is known that Laguerre's method exhibits cubic convergence to a simple root and linear convergence to a multiple root \cite{parlett64,ralstonrabinowitz01}. It is also known that for a real polynomial with real roots, the method converges to a root from any initial guess $z_0\in\Real$ \cite{ralstonrabinowitz01}. A particular feature of interest is that even if the initial guess is a real number, convergence to a complex root can occur due to the square root in the denominator of \eqref{eq:laguerre}. In many cases, the method seems to converge to a root from any initial guess in the complex plane, although this is not the case in general \cite{ray66}. For example, consider the polynomial $p_n(z)=z^n-1$ with $n\ge3$, for which both the first and the second derivative vanish at $z=0$, and $L(0)$ is undefined (in what follows, we will consider the extended complex plane $\hat\C=\C\cup\{\infty\}$ so that $L(0)=\infty$ and $L(\infty)=0$, and $\{0,\infty\}$ forms a two-cycle of the method). It is also known \cite{kahan67} that if $p(z)$ is a polynomial of degree $n$ and $z\in\C$, then there exists a root $z^*$ of $p$ such that $|z-z^*|\le\sqrt{n}\,|z-L(z)|$.

The Laguerre iteration function \eqref{eq:laguerre} is  sometimes claimed to be invariant under M\"{o}bius transformations \cite{drakopoulos03,parlett64}, although the correct, weaker statement is given and proved in \cite{ray66}. For the classes of quadratics and cubics of the form $p_c(z)=z^2+c$ and $p_\lambda(z)=(z-1)(z^2+z+\lambda)$, respectively, with $c,\lambda\in\C$, the Laguerre iteration function \eqref{eq:laguerre} can be shown to not have any free critical points \cite{drakopoulos03}, and a generalization to all complex quadratics and cubics is suggested based on the invariance under M\"{o}bius transformations.

\subsection{Roots of Unity}
When applied to the polynomial $p_n(z)=z^n-1$, $n\ge2$, the Laguerre iteration function \eqref{eq:laguerre} for $z\ne0$ simplifies to
\begin{equation*}
  L(z)
  =
  z\,\frac{z^{-n/2}\pm(n-1)}{z^{n/2}\pm(n-1)},
\end{equation*}
where again the sign is chosen to maximize the modulus of the denominator. Using the principal square root of $z^n$, which will result in an expression with a nonnegative real part, we can rewrite $L(z)$ as $L_p(z)$ given by
\begin{equation}
  \label{eq:Lp}
  L_p(z)
  =
  z\,\frac{\frac{1}{\sqrt{z^n}}+(n-1)}{\sqrt{z^n}+(n-1)}.
\end{equation}

We note that the roots of $p_n$ are exactly the fixed points of $L_p$, and it is easy to check that the derivative of $L_p$ vanishes at the roots, so they are attracting fixed points, and each has an open neighborhood contained in its basin of attraction.

It is straightforward to check that in the case $n=2$ the method converges in one iteration for any initial guess $z_0\in\C$. If $\Re(z_0)>0$, or if $\Re(z_0)=0$ and $\Im(z_0)\ge0$, then $L_p(z_0)=1$; otherwise $L_p(z_0)=-1$.

In the cases with $n\ge3$, the method has a two-cycle consisting of $0$ and $\infty$ in the extended complex plane $\hat\C$. Other than this two-cycle, the method is globally convergent for $n=3,4$ \cite{ray66,curryfiedler88}.

The behavior of Laguerre's method is quite different in the cases with $n\ge5$ and is the subject of our interest. In the following sections, we closely follow the analysis of Curry and Fiedler \cite{curryfiedler88}, which in turn is based on the work of Ray \cite{ray66}. We provide additional insights and graphical illustrations for some of the results. While the main focus will be on the cases with $n\ge5$, if a result applies more generally, we will state so.

For future reference, we note that for $r>0$, we have \cite{curryfiedler88}
\begin{equation}
  \label{eq:mag_Lp2}
  |L_p(re^{i\theta})|^2
  =
  \frac{1}{r^{n-2}}
  \left(
    \frac{1+(n-1)^2r^n+2(n-1)\left|\cos\frac{n\theta}{2}\right|r^{n/2}}{r^n+(n-1)^2+2(n-1)\left|\cos\frac{n\theta}{2}\right|r^{n/2}}
  \right).
\end{equation}

\section{Symmetries of Laguerre's method}
\label{sec:symmetries}
When applied to the polynomials $p_n(z)=z^n-1$, $n\ge2$, the Laguerre iteration function \eqref{eq:Lp} exhibits several symmetries that simplify the analysis of the method in the extended complex plane $\hat\C$.
In particular, the method possesses an $n$-fold rotational symmetry around the origin, a symmetry with respect to the real axis, and also an inversion symmetry with respect to the unit circle. In Propositions~\ref{prop:rotational_symmetry}--\ref{prop:inversion_symmetry} we provide the precise statements.

We will use the slightly imprecise notation of \cite{curryfiedler88} and denote by $\theta_0$ and $\theta_1$ any of the following angles for $k=0,\dots,n-1$:
\begin{equation}
  \label{eq:theta01}
  \theta_0
  =
  \frac{2k\pi}{n}
  \qquad\text{and}\qquad
  \theta_1
  =
  \frac{(2k+1)\pi}{n}.
\end{equation}
In addition, we define the rays
\begin{equation}
  \label{eq:theta_rays}
  \Theta_0
  =
  \{re^{i\theta_0}\in\C:\ r>0\}
  \qquad\text{and}\qquad
  \Theta_1
  =
  \{re^{i\theta_1}\in\C:\ r>0\}.
\end{equation}
Note that the roots of $p_n$ lie on the rays $\Theta_0$, while the rays $\Theta_1$ divide the complex plane into $n$ congruent sectors bisected by the rays $\Theta_0$.

The following proposition implies that it suffices to study the behavior of $L_p$ in the sector $\{z\in\C:\ -\pi/n<\arg{z}\le\pi/n\}$, i.e., between two consecutive rays $\Theta_1$, and the rest follows by rotational symmetry. This is a special case of the invariance of the method with respect to certain M\"{o}bius transformations \cite{parlett64,ray66}.
\begin{proposition}
  \label{prop:rotational_symmetry}
  For $n\ge2$, the Laguerre iteration function $L_p$ defined in \eqref{eq:Lp} commutes with the rotation by an angle $\alpha=2\pi/n$ about the origin.
\end{proposition}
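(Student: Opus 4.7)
The plan is to exploit the fact that the factor multiplying $z$ in the formula \eqref{eq:Lp} for $L_p(z)$ depends on $z$ only through the quantity $\sqrt{z^n}$, and this quantity is literally invariant (not just up to sign) under rotation by $2\pi/n$.

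Concretely, let $\alpha=2\pi/n$ and write $R_\alpha(z)=e^{i\alpha}z$; the goal is to verify the identity $L_p(R_\alpha(z))=R_\alpha(L_p(z))$ for every $z\in\hat\C$ in the domain of $L_p$. First I would dispose of the fixed points $z=0$ and $z=\infty$ of $R_\alpha$, where the statement reduces to $L_p(0)=\infty$ and $L_p(\infty)=0$. For $z\ne0,\infty$ the key computation is
\begin{equation*}
  (R_\alpha(z))^n
  =
  (e^{i\alpha}z)^n
  =
  e^{in\alpha}z^n
  =
  e^{2\pi i}z^n
  =
  z^n,
\end{equation*}
so $(R_\alpha(z))^n$ and $z^n$ are the same complex number, not merely equal in modulus. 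Applying the principal square root (as defined in the excerpt) gives $\sqrt{(R_\alpha(z))^n}=\sqrt{z^n}$.

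Substituting $R_\alpha(z)$ into the closed form \eqref{eq:Lp} and pulling out the scalar $e^{i\alpha}$, I then get
\begin{equation*}
  L_p(R_\alpha(z))
  =
  e^{i\alpha}z\cdot
  \frac{\frac{1}{\sqrt{(R_\alpha(z))^n}}+(n-1)}{\sqrt{(R_\alpha(z))^n}+(n-1)}
  =
  e^{i\alpha}\,z\cdot
  \frac{\frac{1}{\sqrt{z^n}}+(n-1)}{\sqrt{z^n}+(n-1)}
  =
  R_\alpha(L_p(z)),
\end{equation*}
which is the desired commutation relation.

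There is essentially no hard step here; the only point that deserves care is distinguishing the set-valued square root $z^{1/2}$ from the principal branch $\sqrt{\cdot}$. If one used the set-valued version, $(R_\alpha z)^n=z^n$ would only give equality of square roots up to sign, and one would also have to verify that the sign convention (``the sign is chosen so as to maximize the modulus of the denominator'') is compatible with the rotation. By working with the principal square root version $L_p$ from \eqref{eq:Lp}, this ambiguity vanishes and the proof reduces to the single identity $(e^{i\alpha}z)^n=z^n$.
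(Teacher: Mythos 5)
Your proof is correct and follows essentially the same route as the paper's: use $(e^{i\alpha}z)^n=z^n$ to see that $\sqrt{(e^{i\alpha}z)^n}=\sqrt{z^n}$, substitute into \eqref{eq:Lp}, and pull out the factor $e^{i\alpha}$. The extra remarks about the points $0$ and $\infty$ and about the principal versus set-valued square root are sensible but do not change the argument.
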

\begin{proof}
  Let $T_\alpha$ denote the rotation by an angle $\alpha=2\pi/n$ about the origin, i.e., $T_\alpha(z)=e^{i\alpha}z$. Since $\left(T_\alpha(z)\right)^n=z^n$, substituting into \eqref{eq:Lp}, we get for any $z\in\hat\C$
  \begin{align*}
    L_p(T_\alpha(z))
    =
    e^{i\alpha}z\,\frac{\frac{1}{\sqrt{z^n}}+(n-1)}{\sqrt{z^n}+(n-1)}
    =
    e^{i\alpha}L_p(z)
    =
    T_\alpha\left(L_p(z)\right),
  \end{align*}
  and the result follows.
\end{proof}

The following proposition implies that the behavior of $L_p$ in the sector $\{z\in\C:\ -\pi/n<\arg{z}<\pi/n\}$ is symmetric with respect to the real axis. In the case when $\arg(z)=\theta_1$, Proposition \ref{prop:rotational_symmetry} applies.
\begin{proposition}
  \label{prop:conjugation_symmetry}
  For $n\ge2$ and $z\in\C$ with $\arg{z}\ne\theta_1$, the Laguerre iteration function $L_p$ defined in \eqref{eq:Lp} commutes with the complex conjugation $z\mapsto\bar{z}$.
\end{proposition}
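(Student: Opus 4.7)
The plan is to reduce the claim $L_p(\bar z)=\overline{L_p(z)}$ to the single nontrivial assertion that $\sqrt{(\bar z)^n}=\overline{\sqrt{z^n}}$ whenever $\arg z\ne\theta_1$. Once this is in hand, the rest is algebra: complex conjugation distributes over sums, quotients, and reciprocals, and the constants $n-1$ in \eqref{eq:Lp} are real, so conjugating the defining formula yields
\begin{equation*}
  \overline{L_p(z)}
  =
  \bar z\,\frac{\frac{1}{\overline{\sqrt{z^n}}}+(n-1)}{\overline{\sqrt{z^n}}+(n-1)}
  =
  \bar z\,\frac{\frac{1}{\sqrt{(\bar z)^n}}+(n-1)}{\sqrt{(\bar z)^n}+(n-1)}
  =
  L_p(\bar z).
\end{equation*}

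The heart of the argument is therefore the behavior of the principal square root under conjugation of the radicand, and this is where the hypothesis $\arg z\ne\theta_1$ plays its role. Write $z^n=re^{i\phi}$ with $r>0$ and $\phi\in(-\pi,\pi]$. Then $\overline{z^n}=(\bar z)^n$ has modulus $r$ and argument $-\phi$. If $\phi\in(-\pi,\pi)$, then $-\phi$ also lies in $(-\pi,\pi)$, so the principal value convention from the introduction gives
\begin{equation*}
  \sqrt{(\bar z)^n}
  =
  \sqrt{r}\,e^{-i\phi/2}
  =
  \overline{\sqrt{r}\,e^{i\phi/2}}
  =
  \overline{\sqrt{z^n}}.
\end{equation*}
The exceptional case $\phi=\pi$ occurs precisely when $z^n$ is a negative real, i.e., when $\arg z=(2k+1)\pi/n=\theta_1$; in that case the principal square root jumps across the branch cut and $\overline{\sqrt{z^n}}=-\sqrt{z^n}=-\sqrt{(\bar z)^n}$, so the identity genuinely fails, which is consistent with the stated hypothesis.

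The main (and only real) obstacle is this branch-cut bookkeeping for $\sqrt{\cdot}$; everything else reduces to the elementary fact that complex conjugation is a field automorphism of $\mathbb{C}$ fixing $\mathbb{R}$, applied termwise to \eqref{eq:Lp}. I would present the proof in two short steps: first establish the square-root identity above by an explicit argument-and-modulus computation, with an explicit remark that the excluded rays $\Theta_1$ are exactly the preimage of the negative real axis under $z\mapsto z^n$; then substitute into \eqref{eq:Lp} and conjugate to obtain $\overline{L_p(z)}=L_p(\bar z)$.
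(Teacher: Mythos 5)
Your proof is correct and follows essentially the same route as the paper's: the paper's proof simply asserts that $\arg z\ne\theta_1$ implies $\sqrt{(\bar z)^n}=\overline{\sqrt{z^n}}$ and then conjugates \eqref{eq:Lp} termwise, exactly as you do. The only difference is that you spell out the branch-cut verification (and identify $\Theta_1$ as the preimage of the negative real axis under $z\mapsto z^n$), which the paper leaves implicit.
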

\begin{proof}
  If $\arg{z}\ne\theta_1$, then 
  $\sqrt{(\bar{z})^n}=\overline{\sqrt{z^n}}$. Consequently, $L_p(\bar{z})=\overline{L_p(z)}$, and the result follows.
\end{proof}

Finally, Laguerre's iteration function \eqref{eq:Lp} also exhibits inversion symmetry with respect to the unit circle.
\begin{proposition}
  \label{prop:inversion_symmetry}
  For $n\ge2$, the Laguerre iteration function $L_p$ defined in \eqref{eq:Lp} commutes with the inversion with respect to the unit circle $S^1=\{z\in\C:\ |z|=1\}$.
\end{proposition}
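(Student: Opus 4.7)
My plan is to factor the inversion $I(z) = 1/\bar{z}$ with respect to $S^1$ as the composition of complex conjugation with the reciprocal map $J(z) = 1/z$, and thereby reduce the claim to Proposition~\ref{prop:conjugation_symmetry} together with a separate commutation statement $L_p(1/z) = 1/L_p(z)$. Once these two pieces are in hand, chaining them gives
\begin{equation*}
  L_p(I(z)) = L_p(\overline{1/z}) = \overline{L_p(1/z)} = \overline{1/L_p(z)} = 1/\overline{L_p(z)} = I(L_p(z)),
\end{equation*}
which is exactly the desired identity.

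To establish the commutation with $J$, I would substitute $1/z$ directly into \eqref{eq:Lp}. The key observation is that for $z$ off the exceptional rays $\Theta_1$, the point $z^n$ avoids the negative real axis, and so does its reciprocal $1/z^n$; consequently the principal square root satisfies the branch identity $\sqrt{(1/z)^n} = 1/\sqrt{z^n}$. Using this, the fraction in \eqref{eq:Lp} evaluated at $1/z$ becomes precisely the reciprocal of the fraction evaluated at $z$, and a one-line algebraic simplification yields $L_p(1/z) = 1/L_p(z)$. I expect this step to be essentially mechanical once the branch identity is in place.

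The one delicate point — the same one that forced the hypothesis $\arg z\ne\theta_1$ in Proposition~\ref{prop:conjugation_symmetry} — is the branch-cut bookkeeping for the principal square root: when $z$ lies on a ray $\Theta_1$, both $\sqrt{(1/z)^n} = 1/\sqrt{z^n}$ and $\sqrt{\bar z^n} = \overline{\sqrt{z^n}}$ can fail because $z^n$ lies on the negative real axis. Since each ray $\Theta_1$ is setwise invariant under $I$ (the inversion acts there as $r\mapsto 1/r$ without changing the argument), the claim on such rays can be recovered by a continuity argument from the interior of the sectors, so the obstacle is essentially notational rather than substantive, and I would handle it either with an explicit one-sided limit or by simply restating the result modulo the same $\Theta_1$ exception already present in Proposition~\ref{prop:conjugation_symmetry}.
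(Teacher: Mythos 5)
Your proposal is correct and is essentially the paper's own argument: both rest on the principal-branch identity $\sqrt{(1/w)^n}=1/\sqrt{w^n}$ off the rays $\Theta_1$ together with Proposition~\ref{prop:conjugation_symmetry}, merely applying the reciprocal and conjugation steps in the opposite order. The only minor difference is the treatment of the rays $\Theta_1$: the paper checks them by direct substitution into \eqref{eq:Lp}, while your one-sided (counter-clockwise) limit also works since both $z$ and $1/\bar z$ are then approached from the side on which $L_p$ is continuous; note that simply excluding $\Theta_1$ would weaken the stated proposition.
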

\begin{proof}
  Consider first $z\in\C$ with $\arg{z}\ne\theta_1$. Using the same conjugation properties as in the proof of Proposition \ref{prop:conjugation_symmetry}, we have
  \begin{equation*}
    L_p\left(1/\bar{z}\right)
    =
    \frac{1}{\bar{z}}\,\frac{1/\sqrt{\left(1/\bar{z}\right)^n}+(n-1)}{\sqrt{\left(1/\bar{z}\right)^n}+(n-1)}
    =
    \frac{1}{L_p(\bar{z})}
    =
    \frac{1}{\overline{L_p(z)}}.
  \end{equation*}
  We can then check by direct substitution that the same result holds also when $\arg(z)=\theta_1$, and the conclusion of the proposition follows.
\end{proof}

\section{Regions of significance in the complex plane}
\label{sec:regions}
Consider from now on the polynomial $p_n(z)=z^n-1$ with $n\ge5$ and the corresponding Laguerre iteration function $L_p$ given by \eqref{eq:Lp}. Following \cite{curryfiedler88}, we start by defining several regions in the extended complex plane $\hat\C$ relevant for the study of the dynamics of Laguerre's method. We will provide relevant results, some of which are proved in \cite{curryfiedler88}.

It is stated in \cite{curryfiedler88} that the regions in \eqref{eq:regions} below ``contain all the dynamics'' of \eqref{eq:Lp}. This is not quite true, although these regions are of significance in the analysis. They divide $\C\setminus\{0\}$ into disjoint subsets and are defined as
\begin{equation}
  \label{eq:regions}
  \begin{split}
    D
    &=
    \{z\in\C:\ 0<|z|<1\text{ and }|L_p(z)|>1/|z|\},\\
    \partial D
    &=
    \{z\in\C:\ 0<|z|<1\text{ and }|L_p(z)|=1/|z|\},\\
    K_0
    &=
    \{z\in\C:\ 0<|z|<1\text{ and }|L_p(z)|<1/|z|\},\\
    S^1
    &=
    \{z\in\C:\ |z|=1\},\\
    K_1
    &=
    \{z\in\C:\ |z|>1\text{ and }|L_p(z)|>1/|z|\},\\
    \partial E
    &=
    \{z\in\C:\ |z|>1\text{ and }|L_p(z)|=1/|z|\},\\
    E
    &=
    \{z\in\C:\ |z|>1\text{ and }|L_p(z)|<1/|z|\}.
  \end{split}
\end{equation}
Note that, due to the use of the principal square root in \eqref{eq:Lp}, $L_p$ is continuous everywhere in $\C\setminus\{0\}$ except across the rays $\Theta_1$; however, $|L_p|$ is continuous across $\Theta_1$, so the notation $\partial D$ and $\partial E$ is justified, since $\partial D$ and $\partial E$ are the boundaries of $D$ and $E$, respectively, in $\C\setminus\{0\}$. See Fig.~\ref{fig:regions} for an illustration of the sets in \eqref{eq:theta_rays} and \eqref{eq:regions} for $n=16$; the cases with other values of $n$ are similar. For future reference we note that $L_p$ is ``counter-clockwise'' continuous across the rays $\Theta_1$. That is, if $z\to re^{i\theta_1}$ with $\arg{z}<\theta_1$, then $L_p(z)\to L_p(re^{i\theta_1})$. This is not the case in the ``clockwise'' direction.

\begin{figure}
  \includegraphics[width=0.6\textwidth]{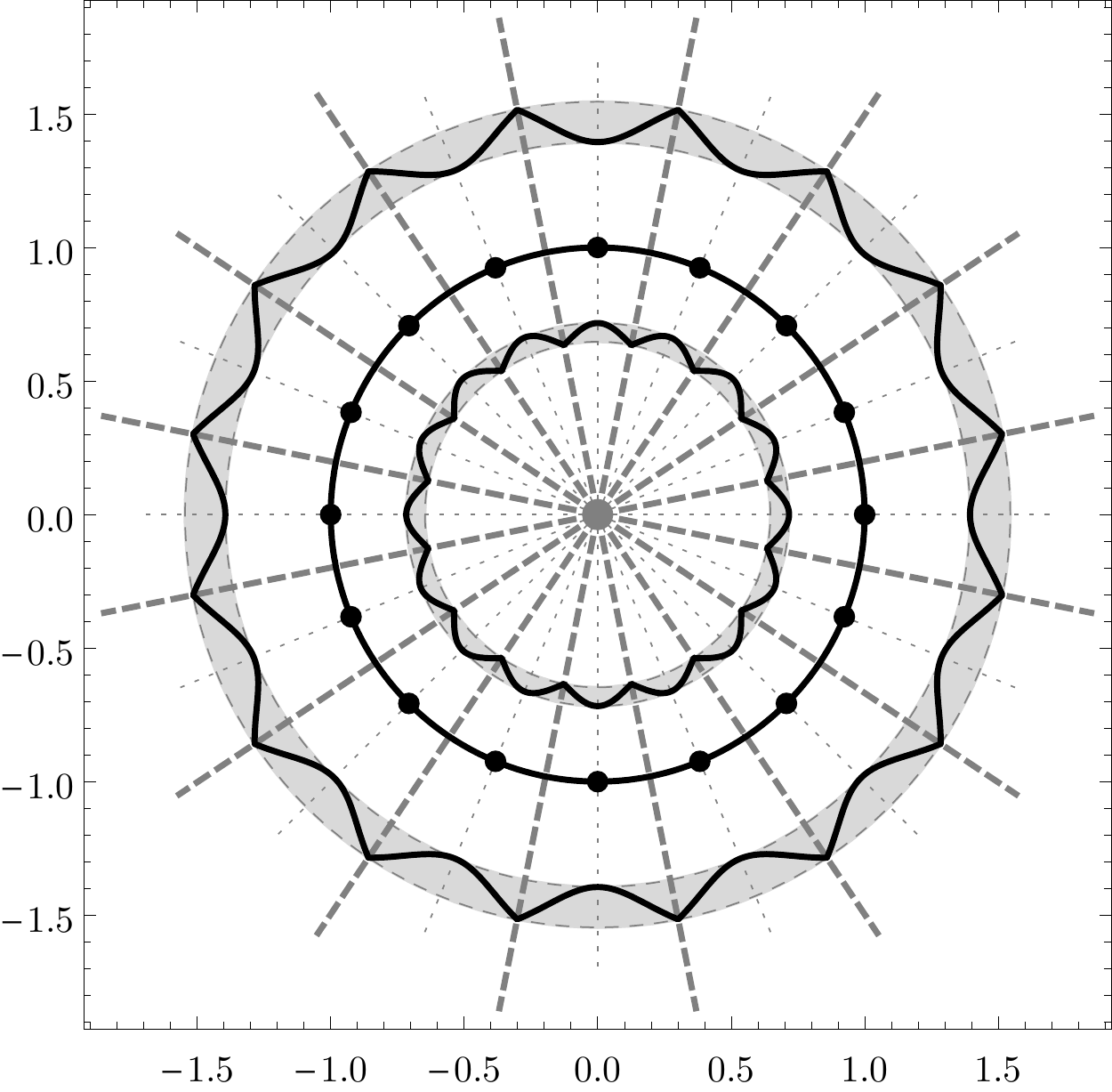}
  \caption{Illustration of the regions of significance for $p_n(z)=z^n-1$ with $n=16$ defined in \eqref{eq:theta_rays} and \eqref{eq:regions}. The thick solid curves correspond to the solutions of equation \eqref{eq:f=0} and are, in the order of increasing distance from the origin, $\partial D$, $S^1$, and $\partial E$. The dots indicate the position of the roots of $p_n$. The thick dashed lines are the rays $\Theta_1$, while the thin dotted lines are the rays $\Theta_0$. The thin dashed circles have radii $s_0<r_0<1/r_0<1/s_0$ as defined in \eqref{eq:r0s0}. Finally, the open region between $\partial D$ and $S^1$ is $K_0$, and the open region between $S^1$ and $\partial E$ is $K_1$.}
  \label{fig:regions}
\end{figure}

\section{The characteristic function for $\partial D$, $S^1$, and $\partial E$}
\label{sec:char_fun}
As in \cite{ray66,curryfiedler88}, we now focus on the algebraic characterization of $\partial D$ and $\partial E$. This will lead to a definition and study of a ``characteristic function'' \eqref{eq:f} below that will allow us to determine the shapes of the boundary curves as shown in Fig.~\ref{fig:regions}.

Writing $z=re^{i\theta}$, $0<r<\infty$, we note that both $\partial D$ and $\partial E$ are characterized by the same equation,
\begin{equation}
  \label{eq:bDbE}
  |L_p(z)|
  =
  1/|z|
  \qquad\text{or}\qquad
  |L_p(re^{i\theta})|
  =
  \frac{1}{r}.
\end{equation}
Using \eqref{eq:mag_Lp2}, equation \eqref{eq:bDbE} is equivalent to \cite{curryfiedler88,ray66}
\begin{equation}
  \label{eq:f=0}
  f_n(r,\theta)
  =
  0,
\end{equation}
where the ``characteristic function'' $f_n$ is defined as
\begin{equation}
  \label{eq:f}
  f_n(r,\theta)
  =
  r^{2n-4}
  +
  2(n-1)\left|\cos\frac{n\theta}{2}\right|r^{n/2}(r^{n-4}-1)
  -
  (n-1)^2r^n
  +
  (n-1)^2r^{n-4}
  -
  1.
\end{equation}

We summarize relevant results (some stated in \cite{curryfiedler88}) in the following theorem.
\begin{theorem}
  \label{thm:f_properties}
  Let $n\ge5$, $z=re^{i\theta}$ with $0<r<\infty$, and let $f_n(r,\theta)$ be defined as in \eqref{eq:f}. We then have the following.
  \begin{enumerate}
    \item For every $\theta\in\Real$, equation \eqref{eq:f=0} has exactly three positive zeroes, $r_D$, $1$, and $r_E$, such that $r_D<1<r_E=1/r_D$. In addition, $r_E<(n-1)^{2/(n-4)}$, so the zeroes converge to $1$ as $n\to\infty$.
    \item Each of the regions in \eqref{eq:regions} corresponds to a particular sign of $f_n$:
      \begin{align*}
        f_n(r,\theta)
	=
	0
        &\quad\Leftrightarrow\quad
        |L_p(z)|
        =
        1/|z|
        \quad\Leftrightarrow\quad
        z\in\partial D\cup S^1\cup\partial E,\\
        f_n(r,\theta)
	<
	0
	&\quad\Leftrightarrow\quad
        |L_p(z)|
        >
        1/|z|
        \quad\Leftrightarrow\quad
        z\in D\cup K_1,\\
        f_n(r,\theta)
	>
	0
	&\quad\Leftrightarrow\quad
	|L_p(z)|
	<
	1/|z|
	\quad\Leftrightarrow\quad
	z\in K_0\cup E.
	\end{align*}
    \item The boundaries $\partial D$ and $\partial E$ correspond to polar curves of the form $r=r_D(\theta)$ and $r=r_E(\theta)$. The function $r_D(\theta)$ is maximized at any $\theta=\theta_0$ and minimized at any $\theta=\theta_1$, while the function $r_E(\theta)$ is minimized at any $\theta=\theta_0$ and maximized at any $\theta=\theta_1$. In addition, both $r_D(\theta)$ and $r_E(\theta)$ are monotonic between any two consecutive angles $\theta_0$ and $\theta_1$.
    (See Fig.~\ref{fig:regions}.)
  \end{enumerate}
\end{theorem}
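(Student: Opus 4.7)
The plan is to treat the three parts in sequence, exploiting the algebraic structure of $f_n$ as the unifying ingredient: a reciprocal pairing of zeros, an explicit slope at $r=1$, and a clean $\theta$-dependence that enters only through the factor $|\cos(n\theta/2)|$.

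For Part~1, I first verify by direct substitution that $f_n(1,\theta)=0$ and, by a term-by-term computation, establish the antipalindromic identity $r^{2n-4}f_n(1/r,\theta)=-f_n(r,\theta)$. This pairs nonunit positive zeros into $(r_0,1/r_0)$, giving $r_E=1/r_D$ automatically once one such pair is produced. To produce a pair, I note that $f_n(0^+,\theta)=-1$ and check that $\partial_r f_n(1,\theta)<0$ uniformly in $\theta\in\mathbb R$ and $n\ge 5$ (using only $|\cos(n\theta/2)|\in[0,1]$); combined with $f_n(1,\theta)=0$ this makes $f_n$ strictly positive just to the left of $r=1$, and the intermediate value theorem then yields $r_D\in(0,1)$, whence $r_E=1/r_D\in(1,\infty)$ is the third zero. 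To cap the count at three I apply Descartes' rule of signs to $f_n$ viewed as a polynomial in $r^{1/2}$; for $n\ge 8$ the sign pattern collapses to three sign changes directly, while for $n\in\{5,6,7\}$ an auxiliary step (factoring out $(r-1)$ and re-applying Descartes, or a continuity-in-$|\cos(n\theta/2)|$ argument anchored at the easy baseline $|\cos(n\theta/2)|=0$) rules out additional zeros. For the quantitative upper bound, substituting $r_0:=(n-1)^{2/(n-4)}$ makes $r_0^{n-4}=(n-1)^2$, which cancels the two leading terms of $f_n$ and leaves $(n-1)^4-1$ plus nonnegative terms, so $f_n(r_0,\theta)>0$; since $f_n<0$ on $(1,r_E)$ and $f_n>0$ on $(r_E,\infty)$, this forces $r_0>r_E$.

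Part~2 is a one-step rewrite: from \eqref{eq:mag_Lp2}, the quantity $r^2|L_p|^2-1$ equals $-r^{4-n}f_n(r,\theta)$ divided by the manifestly positive factor $r^n+(n-1)^2+2(n-1)|\cos(n\theta/2)|r^{n/2}$, so the sign of $r|L_p|-1$ is opposite to that of $f_n$; combining with the sign of $|z|-1$ read off from \eqref{eq:regions} identifies each of $D,K_0,K_1,E$ with its corresponding sign of $f_n$. For Part~3, Part~1 gives simple zeros, hence $\partial_r f_n\ne 0$ (indeed positive, from the left-to-right sign pattern of $f_n$ at each zero) at $r_D$ and $r_E$, so the implicit function theorem yields smooth local graphs $r=r_D(\theta)$ and $r=r_E(\theta)$ with $r'(\theta)=-\partial_\theta f_n/\partial_r f_n$. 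The $\theta$-dependence sits entirely inside $2(n-1)r^{n/2}(r^{n-4}-1)|\cos(n\theta/2)|$; the sign of $r^{n-4}-1$ flips between the $r<1$ and $r>1$ regimes, while $|\cos(n\theta/2)|$ decreases smoothly from $1$ at $\theta_0$ to $0$ at $\theta_1$ and then mirrors back. Combining these sign patterns with $\partial_r f_n>0$ on the two boundaries yields the claimed monotonicity of $r_D$ and $r_E$ and their min/max assignments at $\theta_0$ and $\theta_1$; continuity at $\theta_1$ where $|\cos(n\theta/2)|$ has only a corner is immediate from the continuity of $f_n$ itself.

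The main obstacle is the uniqueness step in Part~1, specifically capping the count at three when $n\in\{5,6,7\}$: Descartes alone permits up to five positive zeros and the reciprocal pairing only constrains parity. A uniform alternative I would fall back on is to rewrite $f_n=0$ as an intersection condition of the form $N(r)/D(r)=r^{n-4}$, where $N/D$ is a strictly increasing explicit rational function of $r^{n/2}$; this reduces the zero count to a single unimodality statement about a function whose symmetry under $r\mapsto 1/r$ forces its critical point to lie at $r=1$. This is the step I expect to absorb the bulk of the argument.
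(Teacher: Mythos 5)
Your Parts 2 and 3, the existence half of Part 1 (the values $f_n(0^+,\theta)=-1$, $f_n(1,\theta)=0$, $\partial_r f_n(1,\theta)<0$, the antipalindromic identity $r^{2n-4}f_n(1/r,\theta)=-f_n(r,\theta)$, which is just the inversion symmetry of Proposition \ref{prop:inversion_symmetry} in algebraic form), and the bound $f_n\bigl((n-1)^{2/(n-4)},\theta\bigr)>0$ are all correct and essentially the route the paper takes; your observation that for $n\ge8$ the exponent ordering in \eqref{eq:f} gives only three sign changes, so Descartes alone caps the count, is a genuine (and correct) simplification for those $n$. The problem is exactly where you flag it: for $n\in\{5,6,7\}$ the exponents interlace so that Descartes permits five positive zeros, and none of your three fallbacks is actually carried out. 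The ``continuity in $|\cos(n\theta/2)|$'' idea begs the question: as the parameter varies, a reciprocal pair of zeros could be born at a tangency, and excluding such tangencies is precisely the uniqueness claim. Factoring out $(r-1)$ and re-applying Descartes is not checked (and the quotient's sign pattern is not obviously better), and the $N(r)/D(r)=r^{n-4}$ reformulation replaces the claim by an unproved unimodality lemma of comparable difficulty (both sides of that equation are increasing, so intersection counting is not automatic). As written, Part 1 is therefore incomplete for $n=5,6,7$.

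The gap can be closed with the ingredients you already have, in either of two ways. The paper's way: if $f(r)=f_n(r,\theta)$ had five positive zeros, the reciprocal pairing together with the sign data ($f<0$ near $0$, $f>0$ just left of $1$, $f<0$ just right of $1$, $f\to+\infty$) forces two of them to be multiple; then $f'$ would have at least six positive zeros (four from Rolle between consecutive zeros of $f$, two more at the multiple zeros), while $f'$ has at most five terms of distinct powers of $r$ (or of $R=r^{1/2}$), hence at most four sign changes --- a contradiction. Alternatively, sharpen your own counting: Descartes bounds the number of positive zeros \emph{with multiplicity} by five; the zero at $r=1$ is simple since $\partial_r f_n(1,\theta)\le-2n^2<0$; the identity $r^{2n-4}f_n(1/r,\theta)=-f_n(r,\theta)$ pairs zeros in $(0,1)$ with zeros in $(1,\infty)$ \emph{preserving multiplicity}, so at most two zeros (with multiplicity) lie in $(0,1)$; and total multiplicity two in $(0,1)$ (one double zero, or two simple crossings) would leave $f<0$ just left of $1$, contradicting $f(1)=0$, $f'(1)<0$. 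Hence exactly one simple zero in $(0,1)$ and exactly three in all. Two cosmetic omissions: state that $1\le r_E<(n-1)^{2/(n-4)}\to1$ (so all three zeros tend to $1$), and note that your sign computation in Part 2 is exactly the identity $r^2|L_p(re^{i\theta})|^2-1=-r^{4-n}f_n(r,\theta)\big/\bigl(r^n+(n-1)^2+2(n-1)\lvert\cos\tfrac{n\theta}{2}\rvert\,r^{n/2}\bigr)$ obtained from \eqref{eq:mag_Lp2}, which is fine.
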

\begin{proof}
  (1) Let $n\ge5$, $\theta\in\Real$, and define $f(r)=f_n(r,\theta)$. Note that $f$ is a differentiable function and
  \begin{equation}
    \label{eq:f_properties}
    f(0)=-1,
    \qquad
    f(1)=0,
    \qquad
    \lim_{r\to+\infty}f(r)=+\infty,
    \qquad\text{and}\qquad
    f'(1)<0.
  \end{equation}
  (It is easy to show that $f'(1)\le-2n^2$.) This implies that $f$ has at least three positive zeroes. From \eqref{eq:bDbE} and Proposition \ref{prop:inversion_symmetry} it follows that, other than $1$, the zeroes of $f$ come in reciprocal pairs, so the actual number of zeroes is an odd number greater than or equal to $3$. As in \cite{curryfiedler88,ray66}, we will invoke Descartes' rule of signs. When $n$ is even, $f$ is a polynomial, so the rule can be applied directly. When $n$ is odd, we can apply it to $g(R)=f(R^2)$, which is a polynomial that also satisfies \eqref{eq:f_properties} with $f(r)$ replaced by $g(R)$. Hence, we focus on $f$ with the understanding that $g$ is handled exactly the same way. Note that $f$ can be expanded to contain at most $6$ terms with different powers of $r$, hence there are at most $5$ sign changes, and $f$ has at most $5$ positive zeroes. From \eqref{eq:f_properties} it now follows that if $f$ had $5$ zeroes, two of them would have to have multiplicity greater than $1$ and $f'$ would have to have at least $6$ positive zeroes ($4$ between the zeroes of $f$ and at least $2$ more from the multiple roots of $f$). This is, however, impossible, since $f'$ is another polynomial with at most $5$ terms of different powers of $r$, hence Descartes's rule of signs implies $f'$ has at most $4$ positive zeroes. Consequently, $f$ has exactly $3$ simple positive zeroes as stated in the theorem.

  Finally, a straightforward computation with $(n-1)^{2/(n-4)}>1$ shows that
  \begin{equation*}
    f\left((n-1)^{2/(n-4)}\right)
    =
    (n-1)^4-1+2n(n-2)(n-1)^{(2n-4)/(n-4)}\left|\cos(n\theta/2)\right|
    >
    0,
  \end{equation*}
  so, since $f$ is negative for $1<r<r_E$ and positive for $r>r_E$, we have that $(n-1)^{2/(n-4)}>r_E$. Application of L'H\^{o}pital's rule shows that $(n-1)^{2/(n-4)}\to1$ as $n\to\infty$.

  (2) This part follows from the definition of the relevant regions in \eqref{eq:regions} and from replacing the equality in \eqref{eq:bDbE} by inequalities, which results in inequalities in \eqref{eq:f=0} \cite{curryfiedler88}.

  (3) Since for every $\theta\in\Real$ there are unique values of $r_D$ and $r_E$, we can think of $\partial D$ and $\partial E$ as polar curves. To prove all of the remaining statements in this part, it is enough to consider $r_E(\theta)$ for $0\le\theta\le\pi/n$, since the rest follows by the symmetries discussed earlier. Note that the cosine term in \eqref{eq:f} is largest for $\theta=0$, so on the circle $r=r_E(0)>1$, as a function of $\theta$, $f_n(r_E(0),\theta)$ is largest (and equal to $0$) exactly when $\theta=\theta_0$. Thus, $f_n$ is negative on the circle for every $\theta\ne\theta_0$, and it follows that $r_E(\theta)\ge r_E(\theta_0)$ for all $\theta\in\Real$. Similarly, the cosine term is smallest when $\theta=\pi/n$, and by a similar argument we get $r_E(\theta)\le r_E(\theta_1)$ for all $\theta\in\Real$. Finally, to prove the last assertion, implicitly differentiate \eqref{eq:f=0} with respect to $\theta$ and observe that $dr_E/d\theta=-(\partial f_n/\partial\theta)/(\partial f_n/\partial r)$ vanishes only when $\theta=\theta_0$ and does not exist only when $\theta=\theta_1$, since the numerator contains a factor $\sin(n\theta/2)$ and the denominator is positive on $r_E(\theta)$ as the proof of part (1) implies. This concludes the proof of the theorem.
\end{proof}

\begin{remark}
  As in \cite{curryfiedler88}, we define the values $0<s_0<r_0<1$ by
  \begin{equation}
    \label{eq:r0s0}
    s_0
    =
    r_D(\theta_1)
    =
    \min_{0\le\theta<2\pi}{r_D(\theta)}
    \qquad\text{and}\qquad
    r_0
    =
    r_D(\theta_0)
    =
    \max_{0\le\theta<2\pi}{r_D(\theta)}.
  \end{equation}
  The four circles with radii $s_0<r_0<1/r_0<1/s_0$ are shown in Fig.~\ref{fig:regions} as dashed circles, and the annuli $\{s_0<r<r_0\}$ and $\{1/r_0<r<1/s_0\}$ are shaded gray.
\end{remark}


\section{Dynamics on the unit circle}
\label{sec:unit_circle}
In this section, we study the dynamics on the unit circle, $S^1$. As a consequence of Theorem \ref{thm:f_properties}, part (1), we have that the unit circle, $S^1$, is invariant under the Laguerre iteration function (see also \cite{curryfiedler88}). This follows from \eqref{eq:f=0} and \eqref{eq:bDbE} with $r=1$.
However, more can be said about the behavior of $L_p$ on $S^1$.
\begin{proposition}
  \label{prop:unit_circle}
  If $z_0\in S^1\setminus\Theta_1$, then the sequence if iterates of Laguerre's method, $\{L_p^k(z_0)\}$, converges monotonically to the nearest root of $p_n(z)=z^n-1$ in the sense that $|L_p^k(z_0)|=1$ and the arguments of $L_p^k(z_0)$ monotonically approach the argument of the nearest root. If $z_0\in S^1\cap\Theta_1$, then the iterates converge monotonically to the nearest root of $p_n$ in the clockwise direction.
\end{proposition}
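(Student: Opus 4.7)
The plan is to use symmetry to reduce to a single closed sector and then analyze the iteration on it explicitly. By Propositions \ref{prop:rotational_symmetry} and \ref{prop:conjugation_symmetry}, it suffices to consider $z_0 = e^{i\theta_0}$ with $\theta_0 \in [0, \pi/n]$; the nearest root in this sector is $1$, which is also the nearest root in the clockwise direction from the endpoint $\theta_0 = \pi/n \in \Theta_1$. The identity $|L_p^k(z_0)| = 1$ is immediate from Theorem \ref{thm:f_properties}(1) (the unit circle is invariant), so the task reduces to tracking arguments.

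For $\theta \in [0, \pi/n]$ we have $n\theta \in [0, \pi]$, so the principal square root gives $\sqrt{z^n} = e^{in\theta/2}$, and \eqref{eq:Lp} simplifies to
\[
  L_p(e^{i\theta})
  =
  e^{i\theta} \cdot \frac{e^{-in\theta/2} + (n-1)}{e^{in\theta/2} + (n-1)}
  =
  e^{i(\theta - 2\psi(\theta))},
\]
where $\psi(\theta) = \arg\bigl(e^{in\theta/2} + (n-1)\bigr)$. The denominator has positive real part $n-1+\cos(n\theta/2)$ and nonnegative imaginary part $\sin(n\theta/2)$, so $\psi(\theta) \in [0, \pi/2)$ with $\psi(\theta) > 0$ precisely when $\theta \in (0, \pi/n]$.

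The crux is to show $0 \le \theta - 2\psi(\theta) < \theta$ on $(0, \pi/n]$, equivalently $0 < \psi(\theta) \le \theta/2$. The upper bound is immediate. For the inequality $\psi(\theta) \le \theta/2$, set $\alpha = \theta/2 \in (0, \pi/(2n)]$; using $\tan\psi(\theta) = \sin(n\alpha)/(n-1+\cos(n\alpha))$, cross-multiplying, and applying the sine angle-subtraction identity, the desired inequality reduces to
\[
  \sin((n-1)\alpha) \le (n-1)\sin\alpha.
\]
This is the main technical step. Since $(n-1)\alpha \le (n-1)\pi/(2n) < \pi/2$, both $\alpha$ and $(n-1)\alpha$ lie in $(0, \pi)$, and the bound follows (strictly, for $n \ge 5$) from the fact that $\sin(x)/x$ is strictly decreasing on $(0, \pi)$.

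With strict monotonicity $\theta_{k+1} = \theta_k - 2\psi(\theta_k) \in (0, \theta_k)$ established, the sequence $\{\theta_k\}$ is decreasing and bounded below by $0$, hence converges to some $\theta^* \in [0, \theta_0)$. Continuity of $L_p$ at $e^{i\theta^*}$ (valid since $\theta^* < \pi/n$) forces $\psi(\theta^*) = 0$, so $\sin(n\theta^*/2) = 0$, leaving only $\theta^* = 0$. Finally, for the $\Theta_1$ case $\theta_0 = \pi/n$, the explicit formula gives that $L_p(e^{i\pi/n})$ has argument $\pi/n - 2\arctan(1/(n-1))$, which lies in $(0, \pi/n)$ using $\arctan(1/(n-1)) < \pi/(2n)$ (valid for $n \ge 3$); this is a clockwise step, after which the remaining iterates fall under the preceding case. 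All other $\Theta_1$ points are handled by rotational symmetry.
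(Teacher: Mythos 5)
Your proof is correct and follows essentially the same route as the paper's: reduce by the symmetries (Propositions \ref{prop:rotational_symmetry} and \ref{prop:conjugation_symmetry}) to $z=e^{i\theta}$ with $0<\theta\le\pi/n$, write $L_p(e^{i\theta})=e^{i(\theta-2\hat\theta)}$ with $\hat\theta=\arg\left(e^{in\theta/2}+(n-1)\right)$, prove $0<\theta-2\hat\theta<\theta$, and conclude by monotone convergence of the arguments together with continuity of $L_p$ away from $\Theta_1$. The only genuine difference is how the key inequality $\hat\theta<\theta/2$ is verified: the paper sets $g(\theta)=\theta-2\arctan\left(\frac{\sin\frac{n\theta}{2}}{\cos\frac{n\theta}{2}+(n-1)}\right)$ and checks $g(0)=0$ and $g'(\theta)>0$ by an explicit derivative computation, whereas you pass through the tangent and the sine subtraction identity to reduce it to $\sin\bigl((n-1)\alpha\bigr)\le(n-1)\sin\alpha$ with $\alpha=\theta/2$, proved from the monotonicity of $\sin x/x$ on $(0,\pi)$; your route is slightly more elementary (no differentiation), while the paper's yields the explicit derivative formula. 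Two cosmetic remarks: strictness of $\sin\bigl((n-1)\alpha\bigr)<(n-1)\sin\alpha$ already holds for all $n\ge3$, not just $n\ge5$; and your separate treatment of the endpoint $\theta=\pi/n$ via $\arctan\left(\frac{1}{n-1}\right)<\frac{\pi}{2n}$ is redundant, since that is exactly your main inequality evaluated at $\alpha=\pi/(2n)$, which your argument on $(0,\pi/n]$ already covers.
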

\begin{proof}
  Due to the symmetries of $L_p$ (Propositions \ref{prop:rotational_symmetry} and \ref{prop:conjugation_symmetry}), it is enough to assume $z=e^{i\theta}$ with $0<\theta\le\pi/n$ and show that $L_p(z)=e^{i\tilde\theta}$ with $0<\tilde\theta<\theta$. Since the sequence of arguments generated by the method will then be a decreasing sequence bounded below by $0$, it will converge, and the corresponding sequence of points on $S^1$ will converge to a fixed point of $L_p$, i.e., to a root of $p_n$ by the continuity of $L_p$ away from $\Theta_1$. The limiting root will have to be $1$ and both assertions of the proposition follow.

  To prove that $0<\tilde\theta<\theta$, we first note that
  \begin{equation*}
    L_p(z)
    =
    e^{i\theta}\,\frac{e^{-i\frac{n\theta}{2}}+(n-1)}{e^{i\frac{n\theta}{2}}+(n-1)},
  \end{equation*}
  where the numerator and the denominator are conjugates of each other and $n\theta/2\le\pi/2$. Thus the whole fraction results in an expression of the form $e^{-2i\hat\theta}$ with $0<\hat\theta<\pi/2$, where
  \begin{equation*}
    \hat\theta=\arg\left(e^{i\frac{n\theta}{2}}+(n-1)\right)=\arctan\left(\frac{\sin\frac{n\theta}{2}}{\cos\frac{n\theta}{2}+(n-1)}\right).
  \end{equation*}
  Consequently, $L_p(z)=e^{i(\theta-2\hat\theta)}$ and it remains to show that $\theta-2\hat\theta>0$. Consider the function
  \begin{equation*}
    g(\theta)
    =
    \theta
    -
    2\hat\theta
    =
    \theta
    -
    2\arctan\left(\frac{\sin\frac{n\theta}{2}}{\cos\frac{n\theta}{2}+(n-1)}\right),
    \qquad
    0\le\theta\le\pi/n.
  \end{equation*}
  One can verify that $g(0)=0$ and, since
  \begin{equation*}
    g'(\theta)
    =
    \frac{2(n-1)(n-2)\sin^2\frac{n\theta}{4}}{\left(\cos\frac{n\theta}{2}+(n-1)\right)^2+\sin^2\frac{n\theta}{2}}
    >
    0,
  \end{equation*}
  we conclude that $g(\theta)>0$ for $0<\theta\le\pi/n$.
\end{proof}

\section{Dynamics of the two-cycle $\{0,\infty\}$}
\label{sec:two_cycle}
As we mentioned earlier, the Laguerre iteration function \eqref{eq:Lp} has a two-cycle $\{0,\infty\}$ for $n\ge3$. We now show that this two-cycle is attracting and its basin of attraction contains a significant portion of $D\cup E$. We will demonstrate later in Section~\ref{sec:boundary} that the basin of attraction can be quite complicated and appears to have a fractal boundary.

The following proposition appears in \cite{curryfiedler88}. It shows that the inner-most and the outer-most white regions in Fig.~\ref{fig:regions} belong to the basin of attraction of the two-cycle. We provide our own proof for the second part of the proposition, as the original proof in \cite{curryfiedler88} is not clear to us.
\begin{proposition}
  \label{prop:0-infty-two-cycle}
  Let $s_0$ be as defined in \eqref{eq:r0s0}. Let $D_{s_0}=\{z\in\C\setminus\{0\}:\ |z|<s_0\}$ and $E_{s_0}=\{z\in\C:\ |z|>1/s_0\}$. Then $L_p(D_{s_0})\subset E_{s_0}$ and $L_p(E_{s_0})\subset D_{s_0}$. Moreover, $D_{s_0}\cup E_{s_0}$ is contained in the basin of attraction of the two-cycle $\{0,\infty\}$.
\end{proposition}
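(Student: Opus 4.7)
The plan is to handle this in three steps: first, establish the two inclusions using $s_0=\min_\theta r_D(\theta)$ and, dually, $1/s_0=\max_\theta r_E(\theta)$; second, show that $|L_p^2|$ strictly contracts on $D_{s_0}\setminus\{0\}$, so that the moduli of the even iterates decrease to some limit $\rho^*\ge 0$; and third, force $\rho^*=0$ by a compactness argument. The only subtlety will be the discontinuity of $L_p$ across the rays $\Theta_1$, which I would circumvent by exploiting that $|L_p|$ is continuous on $\C\setminus\{0\}$, as noted after \eqref{eq:regions}.

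For the inclusions, fix $z\in D_{s_0}$. Since $|z|<s_0\le r_D(\arg z)$, the sign analysis in the proof of Theorem~\ref{thm:f_properties}(1) gives $f_n(|z|,\arg z)<0$, so by Theorem~\ref{thm:f_properties}(2) we have $z\in D$ and $|L_p(z)|>1/|z|>1/s_0$; hence $L_p(D_{s_0})\subset E_{s_0}$. Symmetrically, for $z\in E_{s_0}$ we have $|z|>1/s_0\ge r_E(\arg z)$, which forces $z\in E$ and $|L_p(z)|<1/|z|<s_0$; alternatively, one applies Proposition~\ref{prop:inversion_symmetry} to $1/\bar z\in D_{s_0}$.

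Now fix $z\in D_{s_0}\setminus\{0\}$ and set $a_k=L_p^{2k}(z)$. Combining $|L_p(a_k)|>1/|a_k|$ (since $a_k\in D$) with $|L_p^2(a_k)|<1/|L_p(a_k)|$ (since $L_p(a_k)\in E_{s_0}\subset E$) yields $|a_{k+1}|<1/|L_p(a_k)|<|a_k|$, so $|a_k|\searrow\rho^*$ for some $\rho^*\in[0,|z|)$. Suppose for contradiction that $\rho^*>0$: extract a subsequence $a_{k_j}\to z^*$ with $|z^*|=\rho^*<s_0$; then $z^*\in D_{s_0}\subset D$, so $|L_p(z^*)|>1/|z^*|>1/s_0$. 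Continuity of $|L_p|$ on $\C\setminus\{0\}$ gives $|L_p(a_{k_j})|\to|L_p(z^*)|=:\beta$, and after passing to a further subsequence, $L_p(a_{k_j})\to b$ with $|b|=\beta>1/s_0$. Hence $b\in E_{s_0}\subset E$, so $|L_p(b)|<1/\beta<\rho^*$; continuity of $|L_p|$ at $b\ne 0$ then forces $|a_{k_j+1}|=|L_p(L_p(a_{k_j}))|\to|L_p(b)|<\rho^*$, contradicting $|a_{k_j+1}|\to\rho^*$. Therefore $\rho^*=0$, so $L_p^{2k}(z)\to 0$ and $|L_p^{2k+1}(z)|>1/|L_p^{2k}(z)|\to\infty$, i.e., the orbit accumulates on the two-cycle $\{0,\infty\}$.

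The case $z\in E_{s_0}$ reduces to the previous one via Proposition~\ref{prop:inversion_symmetry}: $1/\bar z\in D_{s_0}$ has orbit converging to $\{0,\infty\}$, and inversion transports this conclusion to $z$. The main obstacle is precisely that $L_p$ need not be continuous across $\Theta_1$, which is why the argument is phrased entirely in terms of the continuous quantity $|L_p|$ and requires two layers of subsequence extraction in the contradiction step.
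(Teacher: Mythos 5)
Your proof is correct, and its overall skeleton (the two inclusions via Theorem~\ref{thm:f_properties}, the strict decrease of $|L_p^{2k}(z)|$, and a Bolzano--Weierstrass contradiction) matches the paper's. Where you genuinely depart from the paper is in how the contradiction at a positive limit $\rho^*$ is obtained: the paper takes a subsequential limit $\tilde z$ of the even iterates and then uses continuity of $L_p$ itself, which forces a case analysis according to whether $\tilde z\in\Theta_1$ and whether it is approached counter-clockwise (where $L_p$ is one-sidedly continuous) or clockwise, the latter case being handled by reflecting the subsequence through $\Theta_1$ via Propositions~\ref{prop:rotational_symmetry} and \ref{prop:conjugation_symmetry}. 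You instead phrase the whole argument in terms of $|L_p|$, which (as noted after \eqref{eq:regions}, and as is visible from \eqref{eq:mag_Lp2}) is continuous on all of $\C\setminus\{0\}$ including across $\Theta_1$; the price is a second subsequence extraction so that $L_p(a_{k_j})$ itself converges to some $b$ with $|b|>1/s_0$, after which $z^*\in D$ and $b\in E$ give $|L_p(b)|<1/\beta<\rho^*$ while $|a_{k_j+1}|\to\rho^*$. This buys you a cleaner argument: no case split at $\Theta_1$ and no reflection trick, since the discontinuity of $L_p$ never enters. Your reduction of the $E_{s_0}$ case via the inversion symmetry of Proposition~\ref{prop:inversion_symmetry} also works, though it is slightly more than needed: since $L_p(E_{s_0})\subset D_{s_0}$, the $E_{s_0}$ case follows at once from the $D_{s_0}$ case applied to $L_p(z)$, which is how the paper handles it.
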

\begin{proof}
  The proof of the first part follows that of \cite{curryfiedler88}. Note first that Theorem~\ref{thm:f_properties} implies that $D_{s_0}\subset D$ and $E_{s_0}\subset E$. Hence, if $z\in D_{s_0}$, then $|L_p(z)|>1/|z|>1/s_0$ and $L_p(z)\in E_{s_0}$. Similarly, if $z\in E_{s_0}$, then $|L_p(z)|<1/|z|<s_0$ and $L_p(z)\in D_{s_0}$.

  To prove the second part, it is enough to show that the basin of attraction of the two-cycle $\{0,\infty\}$ contains $D_{s_0}$. It follows from the previous part that if $z\in D_{s_0}$, then $|L_p(z)|>1/|z|$ and $|L_p^2(z)|<1/|L_p(z)|$, and, consequently, $|L_p^2(z)|<|z|$. Similarly, if $z\in E_{s_0}$, we have $|L_p^2(z)|>|z|$. We will show that for $z\in D_{s_0}$ the even terms of the sequence $\{L_p^k(z)\}$ converge to $0$ and the odd ones to $\infty$. To this end, we observe that $\{|L_p^{2k}(z)|\}$ is a decreasing, bounded, and therefore convergent sequence. If its limit is $0$, we are done, since then $|L_p^{2k}(z)|\to0$ and $|L_p^{2k+1}(z)|>1/|L_p^{2k}(z)|\to\infty$ as $k\to\infty$.

  Assume now that $\lim_{k\to\infty}|L_p^{2k}(z)|=b>0$ and, using the Bolzano--Weierstrass theorem, consider a convergent subsequence of $\{L_p^{2n_k}(z)\}$ and its limit, say, $\tilde{z}\in D_{s_0}$. We then have that if $\tilde{z}$ is not in $\Theta_1$, or if $\tilde{z}\in\Theta_1$ and $\{L_p^{2n_k}(z)\}$ approaches it counter-clockwise, then, by the continuity of $L_p$ and $|L_p|$, we have $b=\lim_{k\to\infty}|L_p^2(L_p^{2n_k}(z))|=|L_p^2(\tilde{z})|<|\tilde{z}|=b$, a contradiction. The only remaining possibility is that $\tilde{z}\in\Theta_1$ and it is not possible to extract a subsequence approaching it counter-clockwise. In that case $\tilde{z}$ is (eventually) approached clockwise, and we can consider a sequence symmetric via a reflection through $\Theta_1$ (Propositions  \ref{prop:rotational_symmetry} and \ref{prop:conjugation_symmetry}). We then obtain a contradiction for this new sequence as in the previous case.
\end{proof}

\begin{remark}
  The regions $D_{s_0}$ and $E_{s_0}$ defined in Proposition~\ref{prop:0-infty-two-cycle} can be seen in Fig.~\ref{fig:regions}: $D_{s_0}$ is the open ball not containing $0$ bounded by the smaller gray annulus, while $E_{s_0}$ is the open region on the outside of the larger gray annulus.
\end{remark}

Although the basin of attraction of the two-cycle $\{0,\infty\}$ is significantly larger than $D_{s_0}\cup E_{s_0}$ (see Section \ref{sec:boundary}), we can immediately extend it in the following sense.
\begin{corollary}
  \label{cor:s0_circles}
  The sets $\{|z|=s_0\}\cap D$ and $\{|z|=1/s_0\}\cap E$ are contained in the basin of attraction of the two-cycle $\{0,\infty\}$.
\end{corollary}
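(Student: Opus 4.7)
The plan is to reduce the corollary directly to Proposition~\ref{prop:0-infty-two-cycle} by applying a single iteration of $L_p$, which sends each of the two circle--region intersections into the already known attracting sets $E_{s_0}$ or $D_{s_0}$. The basin of attraction of the two-cycle $\{0,\infty\}$ is forward invariant in the appropriate sense: if the orbit of $L_p(z)$ converges to $\{0,\infty\}$, then so does the orbit of $z$ itself (it is merely the tail from index $1$). So it suffices to show that one step of the iteration lands in the region covered by Proposition~\ref{prop:0-infty-two-cycle}.

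Concretely, first I would take $z$ with $|z|=s_0$ and $z\in D$. The defining inequality of $D$ in \eqref{eq:regions} yields $|L_p(z)|>1/|z|=1/s_0$, so $L_p(z)\in E_{s_0}$, and Proposition~\ref{prop:0-infty-two-cycle} then places $L_p(z)$, and hence $z$ itself, in the basin. Symmetrically, for $z$ with $|z|=1/s_0$ and $z\in E$, the defining inequality of $E$ yields $|L_p(z)|<1/|z|=s_0$, so $L_p(z)\in D_{s_0}$ (unless $L_p(z)=0$, in which case the orbit is $z,0,\infty,0,\infty,\ldots$ and trivially lies on the two-cycle after one step), and the same citation finishes the argument.

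I do not expect any substantive obstacle; the corollary is essentially a one-step push of Proposition~\ref{prop:0-infty-two-cycle}. The only minor point worth flagging is the degenerate case $L_p(z)=0$ on the outer circle, handled in one line as above (it cannot occur on the inner circle, since there $|L_p(z)|>1/s_0>0$). I would also remark that the intersections with $D$ and $E$ automatically exclude the $n$ tangency points where $|z|=s_0$ meets $\partial D$, and where $|z|=1/s_0$ meets $\partial E$, on the rays $\Theta_1$; at those points one has equality $|L_p(z)|=1/|z|$, so the strict inequality needed to land in $E_{s_0}$ or $D_{s_0}$ fails and the one-step argument would conclude nothing there. This is precisely why the corollary is (correctly) stated only for the open intersections with $D$ and $E$.
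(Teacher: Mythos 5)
Your argument is correct and is essentially the paper's own proof: a point of $\{|z|=s_0\}\cap D$ or $\{|z|=1/s_0\}\cap E$ is mapped by one iteration into $E_{s_0}$ or $D_{s_0}$ respectively, and Proposition~\ref{prop:0-infty-two-cycle} finishes the claim. (Your degenerate case $L_p(z)=0$ is in fact vacuous, since the principal square root in \eqref{eq:Lp} has nonnegative real part so the numerator $1/\sqrt{z^n}+(n-1)$ never vanishes, but handling it does no harm.)
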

\begin{proof}
From the definitions of $D$ and $E$ in \eqref{eq:regions} it is clear that any point in $\{|z|=s_0\}\cap D$ or $\{|z|=1/s_0\}\cap E$ gets mapped into $D_{s_0}\cup E_{s_0}$, and the claim follows.
\end{proof}

We believe that the points $s_0e^{i\theta_1}$ and $(1/s_0)e^{i\theta_1}$ also converge to the $\{0,\infty\}$ two-cycle. Since these points belong to the set $\partial D\cup\partial E$, their images under the Laguerre iteration map \eqref{eq:Lp} lie on the circles with radii $1/s_0$ and $s_0$, respectively, so it suffices to show that their arguments are different from $\theta_1$. We have not been able to find a simple proof for this statement.

\section{Dynamics of convergence}
\label{sec:convergence}
In this section we state a result \cite{curryfiedler88} that shows that the open annulus bounded by the gray annuli in Fig.~\ref{fig:regions} belongs to the basin of attraction of the roots of $p_n(z)=z^n-1$. Again, it turns out that the basin is actually larger (see Section~\ref{sec:boundary}). We provide an elementary proof of the final statement of the theorem, since in the proof in \cite{curryfiedler88} a reference is made to \cite{bodewig46}, which does not seem to address the claim.

\begin{proposition}
  \label{prop:convergence}
  Let $r_0$ be defined as in \eqref{eq:r0s0}. Let $\hat{K}_0=\{z\in\C:\ r_0<|z|<1\}\subset K_0$ and $\hat{K}_1=\{z\in\C:\ 1<|z|<1/r_0\}\subset K_1$. Then $L_p(\hat{K}_0\cup\hat{K}_1\cup S^1)\subset\hat{K}_0\cup\hat{K}_1\cup S^1$, and the sequence $\{L_p^k(z)\}$ with $z\in\hat{K}_0\cup\hat{K}_1\cup S^1$ converges to a root of $p_n(z)=z^n-1$.
\end{proposition}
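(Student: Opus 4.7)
The plan is to produce a strict Lyapunov function on $\hat K_0 \cup \hat K_1$ that drives every orbit onto $S^1$, and then to invoke Proposition \ref{prop:unit_circle} to identify the limit as a particular root. A natural candidate is $V(z) := (\log|z|)^2$, which vanishes exactly on $S^1$. The first step, which simultaneously yields forward invariance of $\hat K_0 \cup \hat K_1 \cup S^1$, is the pointwise estimate $|L_p(z)| > |z|$ for $|z|<1$ and $|L_p(z)| < |z|$ for $|z|>1$. Starting from \eqref{eq:mag_Lp2} and writing $c = |\cos(n\theta/2)|$, a short manipulation gives
\begin{equation*}
|L_p(re^{i\theta})|^2 - r^2 = \frac{(1 - r^n)\bigl[(1 + r^n) + 2(n-1) c\, r^{n/2}\bigr]}{r^{n-2}\bigl[r^n + (n-1)^2 + 2(n-1) c\, r^{n/2}\bigr]},
\end{equation*}
whose sign matches that of $1-r^n$. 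Combining this identity with the defining inequalities of $K_0$ and $K_1$ and with the bounds $r_0 < |z| < 1/r_0$ yields both $L_p(\hat K_0 \cup \hat K_1 \cup S^1) \subset \hat K_0 \cup \hat K_1 \cup S^1$ and the strict decrease $V(L_p(z)) < V(z)$ on $\hat K_0 \cup \hat K_1$ (with $V \equiv 0$ preserved on $S^1$): in both the ``same-side'' and ``flipping'' sub-cases, $|L_p(z)|$ is closer to $1$ than $|z|$ on the logarithmic scale.

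The second step uses this monotonicity together with compactness. Since $\{V(z_k)\}$ is nonincreasing and bounded below by $0$, it converges to some $V_\infty \ge 0$, and the orbit is confined to the compact annulus $\{r_0 \le |z| \le 1/r_0\}$. By Bolzano--Weierstrass some subsequence satisfies $z_{n_k} \to z^*$ with $V(z^*) = V_\infty$. Assuming for the moment $z^* \notin \Theta_1$, continuity of $L_p$ at $z^*$ gives $z_{n_k+1} \to L_p(z^*)$, so $V(L_p(z^*)) = V_\infty = V(z^*)$; strict decrease off $S^1$ then forces $z^* \in S^1$, and hence $V_\infty = 0$ and $|z_k| \to 1$. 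To pass from $|z_k| \to 1$ to convergence of the full orbit, pick any limit point $z^* \in S^1$: by Proposition \ref{prop:unit_circle}, the on-circle iterates $L_p^m(z^*)$ converge monotonically to some root $\zeta$ of $p_n$, so $\zeta$ is itself a limit point of $\{z_k\}$ by closedness of the set of limit points. Because $\zeta$ is an attracting fixed point of $L_p$ (the derivative vanishes there), it has an open forward-invariant neighborhood on which every orbit converges to $\zeta$; once some $z_k$ enters this neighborhood, which must happen since $\zeta$ is a limit point, the full orbit converges to $\zeta$.

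The main obstacle is the one-sided discontinuity of $L_p$ across the rays $\Theta_1$, which threatens the ``continuity at $z^*$'' step above whenever $z^* \in \Theta_1$. I would resolve it exactly as in the proof of Proposition \ref{prop:0-infty-two-cycle}: if some sub-subsequence of $\{z_{n_k}\}$ approaches $z^*$ counterclockwise then the counterclockwise continuity noted after \eqref{eq:regions} makes the argument go through unchanged; otherwise $\{z_{n_k}\}$ is eventually clockwise-approaching, and one applies the rotation and conjugation symmetries (Propositions \ref{prop:rotational_symmetry} and \ref{prop:conjugation_symmetry}) to replace it by its reflection across $\Theta_1$, reducing to the counterclockwise case.
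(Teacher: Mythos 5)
Your proof is correct and follows essentially the same route as the paper's: a quantity measuring distance from $S^1$ (you use $(\log|z|)^2$, the paper uses $\left||z|-1/|z|\right|$) is made strictly decreasing by combining the defining inequalities of $K_0$ and $K_1$ with the bound coming from \eqref{eq:mag_Lp2}, then Bolzano--Weierstrass together with the one-sided continuity across $\Theta_1$ and the reflection trick of Proposition \ref{prop:0-infty-two-cycle} forces $|L_p^k(z)|\to1$, and Proposition \ref{prop:unit_circle} plus the attracting neighborhoods of the roots pins the whole orbit to a single root, exactly as in the paper's final step. Your factorization of $|L_p(re^{i\theta})|^2-r^2$ is correct and in fact supplies explicitly the inequality $|L_p(z)|>|z|$ for $|z|<1$ (and its counterpart for $|z|>1$) that the paper only cites from Curry and Fiedler.
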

\begin{proof}
  We provide a proof along the lines of \cite{curryfiedler88}. First, if $z=re^{i\theta}\in\hat{K}_0\subset K_0$, then $|L_p(z)|<1/r<1/r_0$ from the definition of $K_0$. Using \eqref{eq:mag_Lp2} and $r<1$, we obtain $|L_p(z)|>r>r_0$ (see \cite{curryfiedler88} for details), so we can conclude that $L_p(z)\in\hat{K}_0\cup\hat{K}_1$. In exactly the same fashion, for $z=re^{i\theta}\in\hat{K}_1$ we obtain $r_0<1/r<|L_p(z)|<r<1/r_0$, and, again, $L_p(z)\in\hat{K}_0\cup\hat{K}_1$. In view of Proposition~\ref{prop:unit_circle}, we can conclude that $L_p(z)\in\hat{K}_0\cup\hat{K}_1\cup S^1$ for any $z\in\hat{K}_0\cup\hat{K}_1\cup S^1$.

  We know from Proposition~\ref{prop:unit_circle} that if $z\in S^1$, then $\{L_p^k(z)\}$ converges to a root of $p_n$. It follows from the above inequalities that for $z\in\hat{K}_0\cup\hat{K}_1$ we have
  \begin{equation}
    \label{eq:K0K1_inequalities}
    \min\{|z|,1/|z|\}<|L_p(z)|<\max\{|z|,1/|z|\},
    \qquad
    \min\{|z|,1/|z|\}<\frac{1}{|L_p(z)|}<\max\{|z|,1/|z|\},
  \end{equation}
  and, consequently, the sequence $\left\{\left||L_p^k(z)|-\dfrac{1}{|L_p^k(z)|}\right|\right\}$ is decreasing and convergent. This sequence converges to $0$ (and $\lim_{k\to\infty}|L_p^k(z)|=1$), since otherwise we can consider a subsequence (not relabeled) such that $|L_p^k(z)|\to b\ne1$, extract a further subsequence such that $L_p^{n_k}(z)\to\tilde{z}$, and argue as in the proof of Proposition~\ref{prop:0-infty-two-cycle} that $|L_p(\tilde{z})|=|\tilde{z}|=b$, contradicting the inequalities in \eqref{eq:K0K1_inequalities}.

  Finally, for $z\in\hat{K}_0\cup\hat{K}_1$ and the sequence $\{L_p^k(z)\}$, consider a convergent subsequence $\{L_p^{n_k}(z)\}$ and its limit $\tilde{z}\in S^1$. If $\tilde{z}\in S^1\setminus\Theta_1$, then the sequence $\{L_p^j(\tilde{z})\}$ converges to a root $z^*$ of $p_n$ by Proposition~\ref{prop:unit_circle}. By the continuity of $L_p$
  and the fact that $\{L_p^j(\tilde{z})\}$ converges to $z^*$ monotonically in the sense of Proposition~\ref{prop:unit_circle},
  we have $\lim_{k\to\infty}L_p^{n_k+j}(z)\to L_p^j(\tilde{z})$ for any $j\ge0$. This implies that there exist a large enough $k$ and a large enough $j$ such that $L_p^{n_k+j}(z)$ is in the basin of attraction of the root $z^*$, and, therefore, the whole sequence $\{L_p^k(z)\}$ converges to $z^*$. In particular, $\tilde{z}=z^*$.

  The remaining case with the limit of the subsequence $\{L_p^{n_k}(z)\}$ satisfying $\tilde{z}\in S^1\cap\Theta_1$ can be treated as in the proof of Proposition~\ref{prop:0-infty-two-cycle} by considering further subsequences approaching $\tilde{z}$ clockwise or counter-clockwise; in either case we obtain a contradiction, since arguing as in the previous paragraph we conclude that $\tilde{z}$ has to be a root of $p_n$.
\end{proof}

We again have an extension of the above proposition, arguing as in the proof of Corollary~\ref{cor:s0_circles}.
\begin{corollary}
  The sets $\{|z|=r_0\}\cap K_0$ and $\{|z|=1/r_0\}\cap K_1$ are contained in the basin of attraction of the roots of unity.
\end{corollary}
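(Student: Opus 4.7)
The plan is to reduce the claim to Proposition~\ref{prop:convergence} by showing that a single application of $L_p$ sends any point on the inner (respectively outer) circle into $\hat K_0\cup\hat K_1\cup S^1$; convergence to a root of $p_n$ then follows at once from that proposition.

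First I would fix $z=r_0 e^{i\theta}\in K_0$. The definition of $K_0$ immediately gives the upper bound $|L_p(z)|<1/|z|=1/r_0$. For the strict lower bound $|L_p(z)|>r_0$ I would extract the estimate that is used (without proof) in the argument for Proposition~\ref{prop:convergence}: starting from \eqref{eq:mag_Lp2}, a direct factorization yields
\begin{equation*}
  |L_p(re^{i\theta})|^2-r^2
  =
  \frac{(1-r^n)\bigl(1+r^n+2(n-1)|\cos(n\theta/2)|r^{n/2}\bigr)}{r^{n-2}\bigl(r^n+(n-1)^2+2(n-1)|\cos(n\theta/2)|r^{n/2}\bigr)},
\end{equation*}
which is strictly positive for $0<r<1$; specializing to $r=r_0$ gives $|L_p(z)|>r_0$. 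Combining the two bounds, $r_0<|L_p(z)|<1/r_0$, and since Theorem~\ref{thm:f_properties}(3) guarantees $r_D(\varphi)\le r_0$ and $r_E(\varphi)\ge 1/r_0$ for every angle $\varphi$, the point $L_p(z)$ cannot lie in $D$ or in $E$. Hence $L_p(z)\in\hat K_0\cup S^1\cup\hat K_1$, and Proposition~\ref{prop:convergence} applies.

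The case $z\in\{|z|=1/r_0\}\cap K_1$ is the mirror image: membership in $K_1$ now gives $|L_p(z)|>r_0$, while the same identity with $r=1/r_0>1$ (so that $1-r^n<0$) gives $|L_p(z)|<1/r_0$. The same regional bookkeeping places $L_p(z)$ in $\hat K_0\cup S^1\cup\hat K_1$, and Proposition~\ref{prop:convergence} finishes the proof; alternatively, one could derive the outer case from the inner one via the inversion symmetry of Proposition~\ref{prop:inversion_symmetry}.

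I do not expect a real obstacle here. The only step requiring genuine verification is the factorization identity above, which is an elementary algebraic rearrangement of \eqref{eq:mag_Lp2}; everything else is definitional bookkeeping together with one appeal to Proposition~\ref{prop:convergence}.
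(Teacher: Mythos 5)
Your proof is correct and takes essentially the same route as the paper: show that a single application of $L_p$ sends each circle arc into $\hat{K}_0\cup S^1\cup\hat{K}_1$ and then invoke Proposition~\ref{prop:convergence}, with your factorization of $|L_p(re^{i\theta})|^2-r^2$ from \eqref{eq:mag_Lp2} supplying exactly the inequality the paper uses but leaves to a citation. The only cosmetic point is that the detour through Theorem~\ref{thm:f_properties}(3) and the sets $D$, $E$ is unnecessary, since $\hat{K}_0\cup S^1\cup\hat{K}_1=\{r_0<|z|<1/r_0\}$ is defined by modulus alone, so the bounds $r_0<|L_p(z)|<1/r_0$ already complete that step.
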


However, the remaining points on the circles $\{|z|=r_0\}$ and $\{|z|=1/r_0\}$ form non-trivial, finite two-cycles \cite{ray66,curryfiedler88}.
\begin{proposition}
  \label{prop:two-cycle}
  For every $\theta_0\in\Theta_0$, the set $\{r_0e^{i\theta_0},(1/r_0)e^{i\theta_0}\}$ with $r_0$ defined in \eqref{eq:r0s0} is a two-cycle for the Laguerre iteration function \eqref{eq:Lp}.
\end{proposition}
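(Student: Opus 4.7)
The plan is to reduce the two-cycle claim to a single direct verification: show that $L_p(r_0 e^{i\theta_0}) = (1/r_0)e^{i\theta_0}$. The other half of the cycle, $L_p((1/r_0)e^{i\theta_0}) = r_0 e^{i\theta_0}$, will then follow at once from the inversion symmetry of Proposition \ref{prop:inversion_symmetry}, applied to $z = r_0 e^{i\theta_0}$: since $\theta_0\in\Theta_0$ is disjoint from the excluded rays $\Theta_1$, the identity $L_p(1/\bar z) = 1/\overline{L_p(z)}$ is available, and computing $1/\bar z = (1/r_0)e^{i\theta_0}$ and $1/\overline{(1/r_0)e^{i\theta_0}} = r_0 e^{i\theta_0}$ yields the second relation.

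For the single computation, the key observation is that $\theta_0 = 2k\pi/n$ for some integer $k$, so $z^n = r_0^n e^{i 2k\pi} = r_0^n$ is a positive real; hence the principal square root in \eqref{eq:Lp} is simply $\sqrt{z^n} = r_0^{n/2}$. Substituting into \eqref{eq:Lp} gives
\begin{equation*}
  L_p(r_0 e^{i\theta_0}) = r_0 e^{i\theta_0}\,\frac{r_0^{-n/2} + (n-1)}{r_0^{n/2} + (n-1)},
\end{equation*}
and the fraction on the right is a ratio of positive reals, therefore itself a positive real. In particular, the argument of $L_p(r_0 e^{i\theta_0})$ equals $\theta_0$ exactly.

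To pin down the modulus, I would invoke the characterization of $\partial D$. By definition \eqref{eq:r0s0} we have $r_0 = r_D(\theta_0)$, so Theorem \ref{thm:f_properties}(3) places the point $r_0 e^{i\theta_0}$ on the boundary curve $\partial D$, and the defining equation \eqref{eq:bDbE} gives $|L_p(r_0 e^{i\theta_0})| = 1/r_0$. Combining with the argument calculation forces $L_p(r_0 e^{i\theta_0}) = (1/r_0)e^{i\theta_0}$, and the inversion-symmetry step closes the cycle.

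No essential obstacle is expected: the whole result hinges on the elementary fact that the rays $\Theta_0$ are precisely the directions along which $z^n$ lands on the positive real axis, which collapses the Laguerre fraction to a positive real multiplier and thereby preserves the argument exactly. The only points that need a sentence of care are confirming that the principal-square-root convention is respected (which it is, since $z^n>0$) and verifying the hypothesis $\arg z \ne \theta_1$ of Proposition \ref{prop:inversion_symmetry} (which holds because $\Theta_0\cap\Theta_1 = \emptyset$).
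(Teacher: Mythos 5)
Your proof is correct and follows essentially the same route as the paper's: both rest on the observation that $L_p$ preserves the argument along the rays $\Theta_0$ (the Laguerre fraction is a positive real there, since $z^n=r^n>0$) combined with the defining modulus relation $|L_p(z)|=1/|z|$ on the boundary curves. The only cosmetic difference is that the paper reduces to $\theta_0=0$ via the rotational symmetry and obtains the second half of the cycle from $(1/r_0)e^{i\theta_0}\in\partial E$ directly, whereas you compute on the ray itself and close the cycle with the inversion symmetry of Proposition~\ref{prop:inversion_symmetry}; both are equally valid.
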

\begin{proof}
  By Proposition \ref{prop:rotational_symmetry}, we can assume $\theta_0=0$. From \eqref{eq:Lp} we have that $L_p$ maps real, positive numbers to real, positive numbers, so $L_p(r_0)=1/r_0$ since $r_0\in\partial D$. Similarly, $L_p(1/r_0)=r_0$ since $1/r_0\in\partial E$.
\end{proof}

For completeness, we state the following result that completes the dynamics on $\Theta_0$.
\begin{proposition}
  For every $\theta_0\in\Theta_0$, the set $\{re^{i\theta_0}:\ 0<r<r_0 \text{ or }1/r_0<r\}$ belongs to the basin of attraction of the two-cycle $\{0,\infty\}$ for the Laguerre iteration function \eqref{eq:Lp}.
\end{proposition}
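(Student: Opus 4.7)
The plan is to reduce the iteration to a one-dimensional dynamical system on a ray and then exhibit monotone convergence. By the rotational symmetry in Proposition~\ref{prop:rotational_symmetry}, I may take $\theta_0=0$. On the positive real axis, formula \eqref{eq:Lp} gives $L_p(r)=r(r^{-n/2}+n-1)/(r^{n/2}+n-1)>0$, which is continuous in $r$ since the positive reals meet no ray $\Theta_1$; thus the dynamics restricts to $(0,\infty)$. The inversion symmetry of Proposition~\ref{prop:inversion_symmetry} specializes to $L_p(1/r)=1/L_p(r)$ on positive reals, and this conjugates the iteration on $(1/r_0,\infty)$ to the iteration on $(0,r_0)$ via $r\mapsto 1/r$, so it suffices to prove that for each $r\in(0,r_0)$ one has $L_p^{2k}(r)\to 0$ and $L_p^{2k+1}(r)\to\infty$.

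For such $r$, Theorem~\ref{thm:f_properties}(3) gives $r_D(0)=r_0$, so $r\in D$ and hence $L_p(r)>1/r>1/r_0$. Applying the same inequality to the point $1/L_p(r)\in(0,r_0)\subset D$ yields $L_p(1/L_p(r))>L_p(r)$, and the inversion identity then gives
\begin{equation*}
  g(r):=L_p^2(r)=\frac{1}{L_p(1/L_p(r))}<\frac{1}{L_p(r)}<r.
\end{equation*}
Hence $g$ maps $(0,r_0)$ into itself and strictly decreases every point, so $g$ has no fixed point in $(0,r_0)$.

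It follows that $\{g^k(r)\}$ is strictly decreasing and bounded below by $0$, so it converges to some $b\in[0,r_0)$. If $b>0$, continuity of $g$ at $b$ would force $g(b)=b$, contradicting $g(b)<b$; hence $b=0$. Then $L_p^{2k}(r)\to 0$, and since $g^k(r)\in(0,r_0)\subset D$ gives $L_p(g^k(r))>1/g^k(r)$, we also get $L_p^{2k+1}(r)\to\infty$, which is convergence of the orbit to the two-cycle $\{0,\infty\}$. The only delicate point is the contradiction step for the subsequential limit, analogous to Proposition~\ref{prop:0-infty-two-cycle}; but because the entire orbit stays on $\Theta_0$, which is disjoint from $\Theta_1$, continuity of $L_p$ along the orbit is automatic and no clockwise/counter-clockwise dichotomy arises, making the argument essentially a corollary of the one-dimensional inequality $L_p(r)>1/r$ valid on $(0,r_0)$.
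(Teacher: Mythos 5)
Your proof is correct and takes essentially the same route as the paper's: restrict to the positive real ray by rotational symmetry, show $L_p^2(r)<r$ on $(0,r_0)$ (via $L_p(r)>1/r$ there), conclude from monotone convergence and continuity that the limit of the even iterates must be $0$, and transfer the result to the outer ray $r>1/r_0$ by the inversion symmetry. The paper's argument is simply a terser version of this, with the inequality $0<L_p^2(r)<r$ and the symmetry step left less explicit.
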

\begin{proof}
  The proof is similar to the proof of the second part of Proposition \ref{prop:0-infty-two-cycle}. For $0<r<r_0$, we get $0<L_p^2(r)<r$, so $\lim_{k\to\infty}L_p^{2k}(r)=\tilde{r}$ with $0\le\tilde{r}<r_0$. Now, if $\tilde{r}>0$, we would have the contraction $L_p^2(\tilde{r})<\tilde{r}$ and we would also obtain $L_p^2(\tilde{r})=\tilde{r}$ by continuity of $L_p$. Therefore, $\tilde{r}=0$, and the rest follows by the symmetries of the iteration function.
\end{proof}

Finally, the following result clearly demonstrates that Laguerre's method is not suitable for finding roots of unity for large-degree polynomials \cite{ray66}.
\begin{proposition}
  The set of all points in $\C$ for which Laguerre's method \eqref{eq:method} converges to a root of $p_n(z)=z^n-1$, $n\ge5$, is contained in the annulus $\{z\in\C:\ s_0\le|z|\le1/s_0\}$ with $s_0$ defined in \eqref{eq:r0s0}, whose measure tends to $0$ as $n\to\infty$.
\end{proposition}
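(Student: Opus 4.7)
The plan is to prove the statement as a direct consequence of Proposition~\ref{prop:0-infty-two-cycle} combined with the estimate $r_E<(n-1)^{2/(n-4)}$ from Theorem~\ref{thm:f_properties}, part~(1). There are really two claims to handle: the containment in the annulus, and the vanishing of its measure as $n\to\infty$.

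For the containment, I would argue by contrapositive. Suppose $z\in\C$ satisfies $|z|<s_0$ or $|z|>1/s_0$. In the first case $z\in D_{s_0}$ and in the second $z\in E_{s_0}$, so Proposition~\ref{prop:0-infty-two-cycle} applies and $z$ lies in the basin of attraction of the two-cycle $\{0,\infty\}$. In particular, the iterates $\{L_p^k(z)\}$ accumulate on $\{0,\infty\}$ and hence cannot converge to any root of $p_n$ (whose roots all lie on $S^1$). Thus every point whose Laguerre orbit converges to a root of $p_n$ must satisfy $s_0\le|z|\le1/s_0$, which is exactly the containment claimed.

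For the measure statement, the Lebesgue measure of the annulus $\{s_0\le|z|\le1/s_0\}$ is $\pi(1/s_0^2-s_0^2)$. By the inversion symmetry from Proposition~\ref{prop:inversion_symmetry} (equivalently, the reciprocal pairing of roots of $f_n$ in Theorem~\ref{thm:f_properties}, part~(1)), we have $1/s_0=\max_\theta r_E(\theta)=r_E(\theta_1)$. Theorem~\ref{thm:f_properties}, part~(1), then gives
\begin{equation*}
  1
  <
  \frac{1}{s_0}
  =
  r_E(\theta_1)
  <
  (n-1)^{2/(n-4)},
\end{equation*}
and the right-hand side tends to $1$ as $n\to\infty$ by the L'H\^{o}pital argument already used in the proof of that theorem. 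Therefore $1/s_0\to1^+$ and $s_0\to1^-$, so $\pi(1/s_0^2-s_0^2)\to0$ as $n\to\infty$.

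I do not anticipate any real obstacle: both pieces are immediate corollaries of results already established. The only mild subtlety is making sure one correctly identifies $1/s_0$ with $\max_\theta r_E(\theta)$ (rather than accidentally writing the bound for $r_E(\theta_0)$), which is handled by invoking the inversion symmetry that pairs the zeroes $r_D(\theta)$ and $r_E(\theta)$ of $f_n(\,\cdot\,,\theta)$.
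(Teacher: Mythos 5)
Your proposal is correct and follows essentially the same route as the paper, whose proof is a one-line citation of Proposition~\ref{prop:0-infty-two-cycle} and Theorem~\ref{thm:f_properties}, part~(1); you simply supply the details (the contrapositive via $D_{s_0}\cup E_{s_0}$, the identification $1/s_0=r_E(\theta_1)$, and the bound $(n-1)^{2/(n-4)}\to1$) that the paper leaves implicit.
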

\begin{proof}
  The claim follows from Proposition \ref{prop:0-infty-two-cycle} and Theorem \ref{thm:f_properties}, part (1).
\end{proof}

\section{The regions of convergence and their boundaries}
\label{sec:boundary}
In this section we present primarily computational results that address the structure of the basins of attraction of Laguerre's method \eqref{eq:method} applied to $p_n(z)=z^n-1$ with $n\ge5$. These results raise additional questions that we summarize in the next section.

We mentioned earlier that for $n=2$ it takes one iteration to get to a root of $p_2$ from any initial guess. It is also known that for $n=3$ or $4$ the method is globally convergent to a root of $p_n$ \cite{drakopoulos03,curryfiedler88,ray66}. However, from the above analysis it follows that for $n\ge5$ this is no longer true; more specifically, the basin of attraction for each $n\ge5$ is contained in the annulus $\{s_0\le|z|\le1/s_0\}$ with $s_0$ given in \eqref{eq:r0s0}. Since $s_0$ is not easily computable, we can use the upper bound $1/s_0<(n-1)^{2/(n-4)}$ (see Theorem \ref{thm:f_properties}). In Fig.~\ref{fig:basins}, we present examples of the basins of attraction for $n=5$, $8$, $12$, and $16$. These are plotted in the squares $[-(n-1)^{2/(n-4)},(n-1)^{2/(n-4)}]\times[-(n-1)^{2/(n-4)},(n-1)^{2/(n-4)}]$ and show that the upper bound is a good estimate of $1/s_0$. Note how, in accordance with Theorem~\ref{thm:f_properties}, the region of convergence shrinks as $n$ increases.
\begin{figure}
  \includegraphics[width=0.49\textwidth]{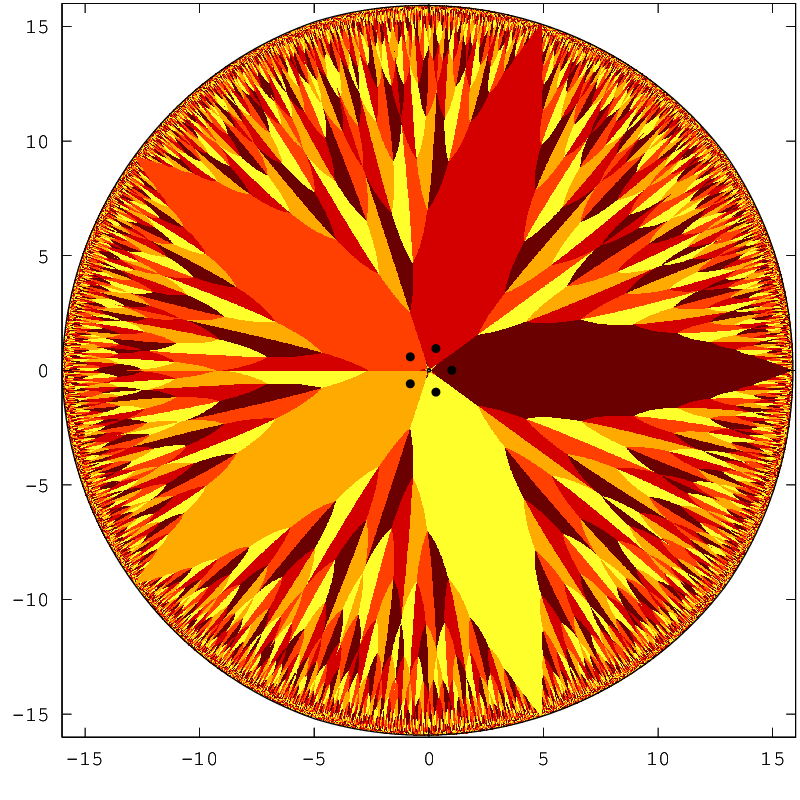}
  \hfill
  \includegraphics[width=0.49\textwidth]{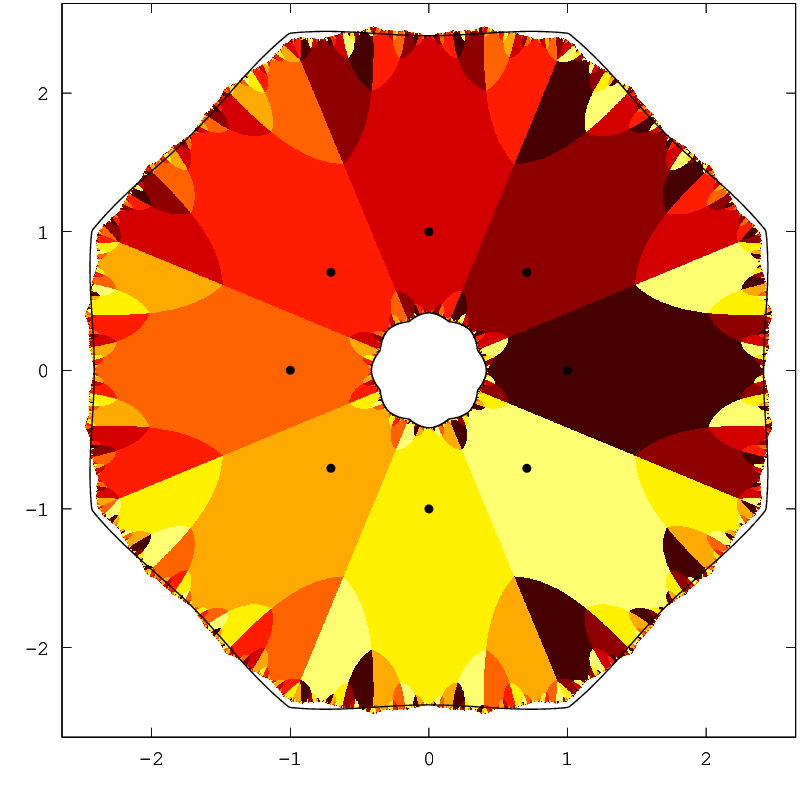}
  \\
  \includegraphics[width=0.49\textwidth]{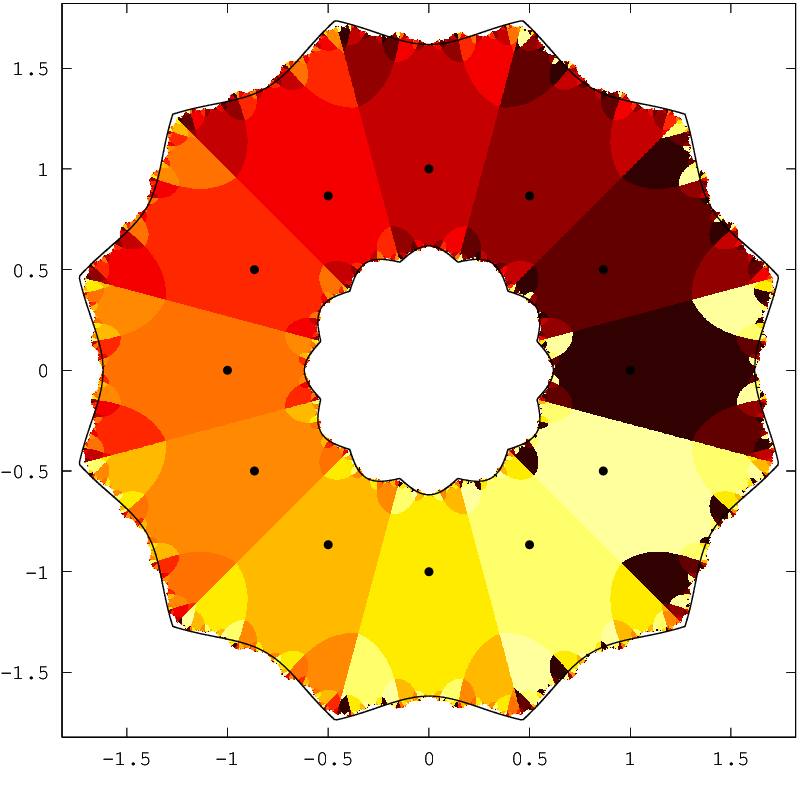}
  \hfill
  \includegraphics[width=0.49\textwidth]{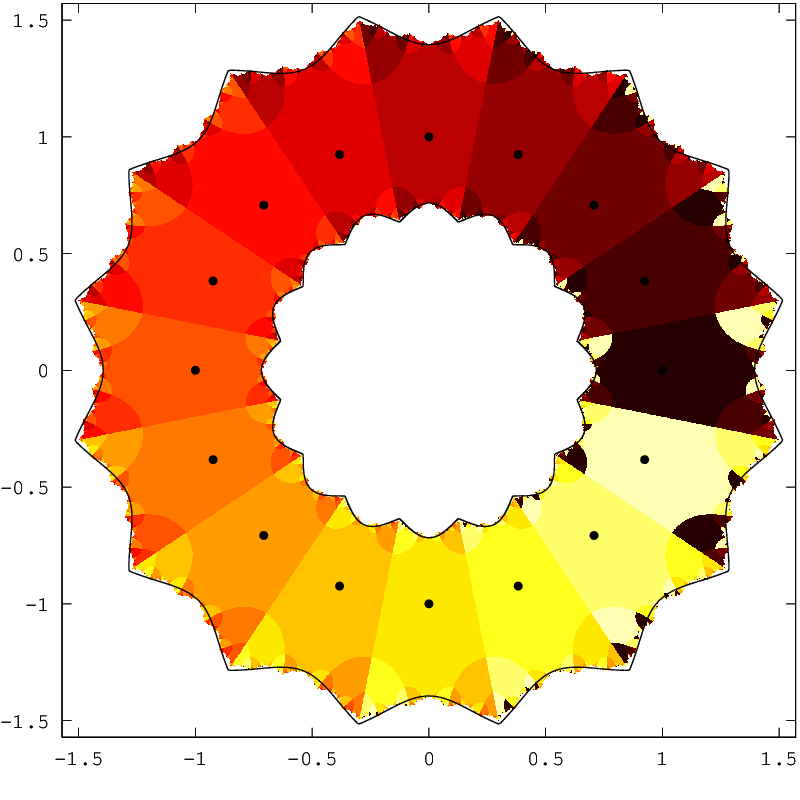}
  \caption{Numerically computed basins of attraction of Laguerre's method applied to the polynomials $p_n(z)=z^n-1$ with $n=5$, $8$, $12$, and $16$ (row-wise, left to right). Each color corresponds to a basin of attraction of a root in the basin. The two black curves in each image are $\partial D$ and $\partial E$, and the dots represent the roots of $p_n$. Note how the boundary of the basin of attraction tracks $\partial D\cup\partial E$, but it appears fractal.}
  \label{fig:basins}
\end{figure}

The boundary of the region of convergence appears fractal (see, e.g., \cite{falconer03} for more on fractals). We demonstrate this in Fig.~\ref{fig:fractal_bdry}, where we show parts of the external boundary of the regions of convergence in the sectors with $\pi(n-1)/n<\theta<\pi(n+1)/n$ for $n=8$, $16$, $24$, and $32$. By the rotational symmetry, Proposition \ref{prop:rotational_symmetry}, the other parts of the external boundary are congruent to the displayed ones.
\begin{figure}
  \includegraphics[height=0.35\textheight]{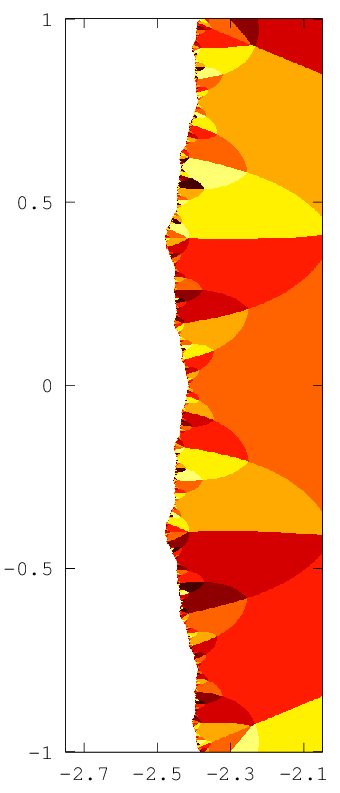}
  \hfill
  \includegraphics[height=0.35\textheight]{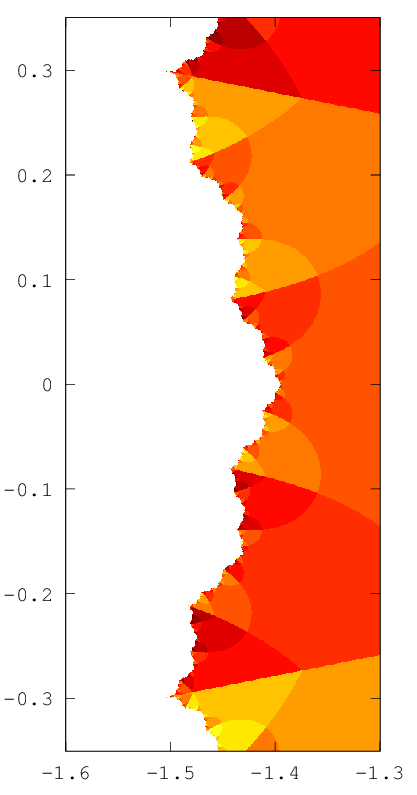}
  \hfill
  \includegraphics[height=0.35\textheight]{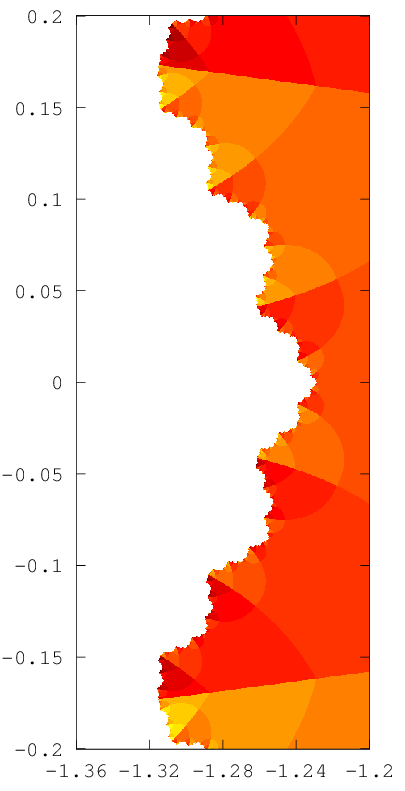}
  \hfill
  \includegraphics[height=0.35\textheight]{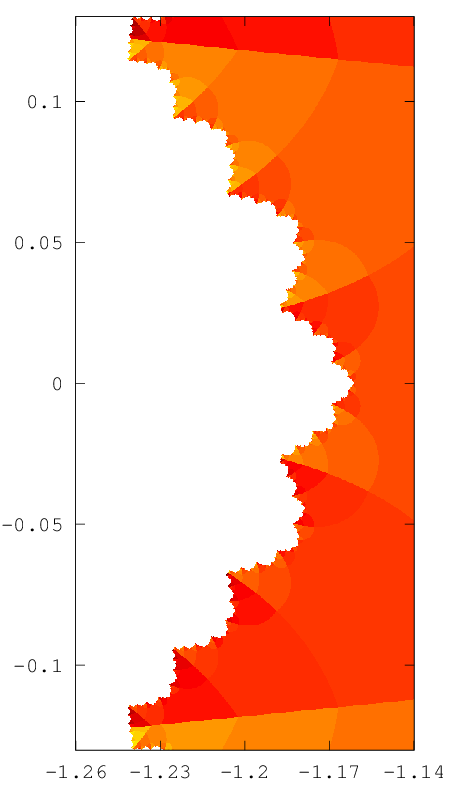}
  \caption{Parts of the boundary of the region of convergence to a root of $p_n(z)=z^n-1$ for $n=8$, $16$, $24$, and $32$. They correspond to the sectors with $\pi(n-1)/n<\theta<\pi(n+1)/n$ and demonstrate the fractal behavior of the boundary.}
  \label{fig:fractal_bdry}
\end{figure}

The boundaries displayed in Fig.~\ref{fig:fractal_bdry} appear self-similar, but they are only quasi self-similar (see, e.g., \cite{falconer03,mclaughlin87} for more on quasi self-similarity). We demonstrate this observation in Fig.~\ref{fig:quasi-ss-bdry}, where we can see slight changes of shape as we zoom in and also as we more carefully examine the shapes within each figure.
\begin{figure}
  \includegraphics[width=\textwidth]{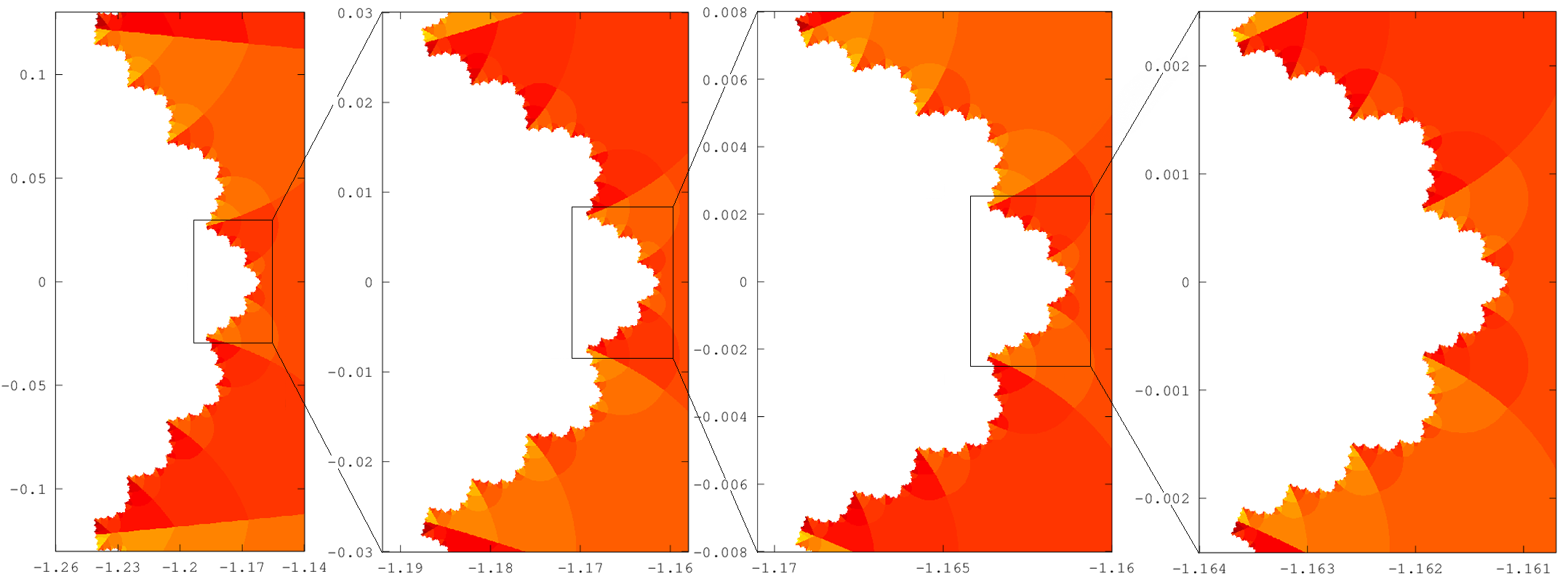}
  \caption{Three consecutive zoom levels into a part of the boundary for $n=32$ (shown also in Fig.~\ref{fig:fractal_bdry}) clearly demonstrate that the boundary of the region of convergence is not self-similar, only quasi self-similar.}
  \label{fig:quasi-ss-bdry}
\end{figure}

In addition, and it came to us as quite a surprise, it seems that the regions of convergence as shown in color in Fig.~\ref{fig:basins} are not, in general, (disregarding the ``hole'' in the middle) simply connected, or even connected! In Figs.~\ref{fig:holes_zoom_128} and \ref{fig:holes_zoom_1024} we present results with $n=128$ and $n=1024$, respectively, and several consecutive zooms into the ``gray area'' $\{1/r_0<|z|<1/s_0\}$ shown in Fig.~\ref{fig:regions}. Note the intricate structure that becomes more prominent for larger values of $n$. Both figures clearly demonstrate the disconnectedness of the basin of attraction of the roots of $p_n$. We chose the values of $n=128$ and $n=1024$ since the ``holes'' become detectable with a naked eye around $n=120$ and we could zoom into them, and the larger value to demonstrate how much more the structure develops as $n$ increases.
\begin{figure}
  \includegraphics[height=0.88\textheight]{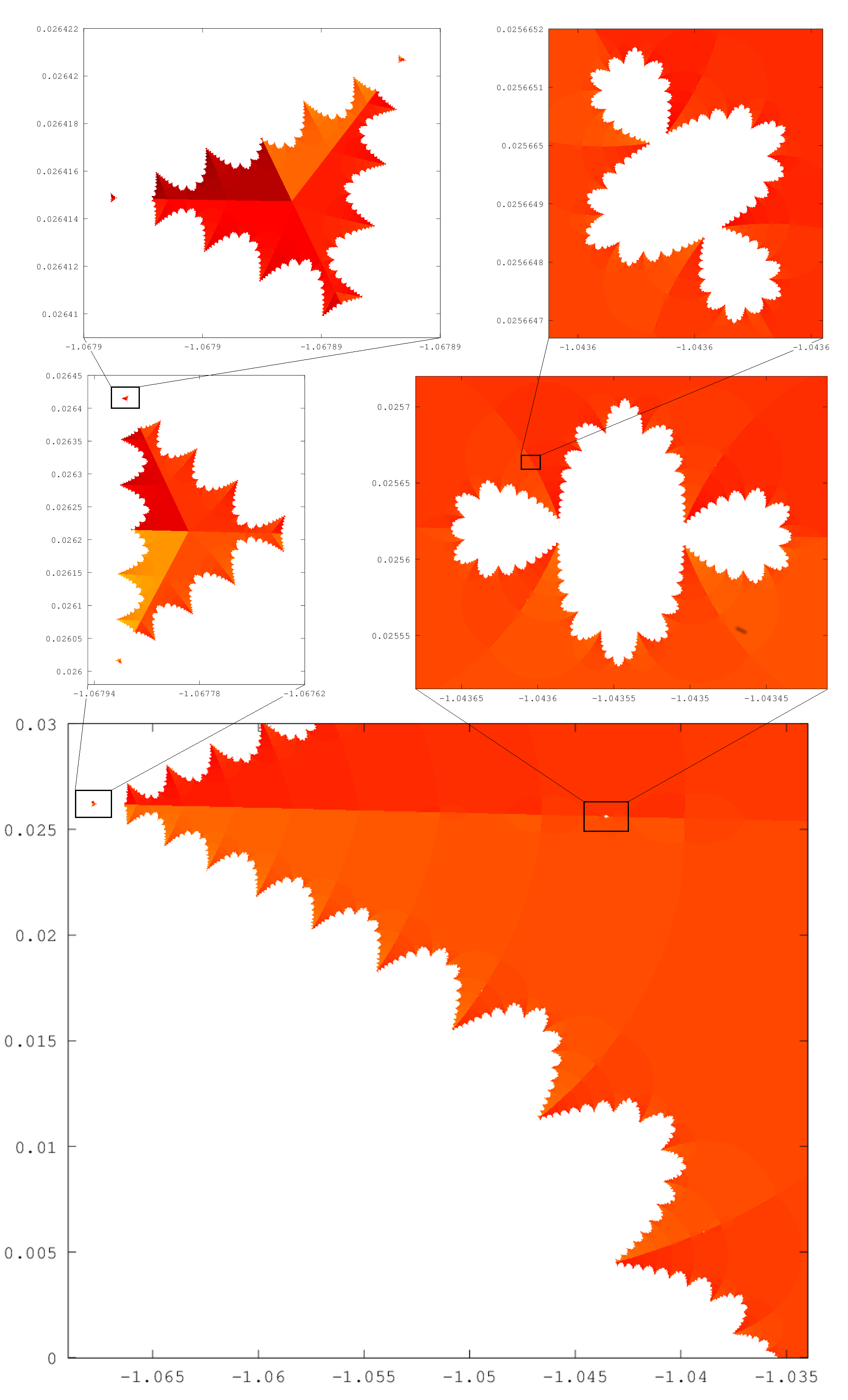}
  \caption{Several consecutive zooms into two parts of the boundary for $n=128$. Note that the region of convergence is not either connected, nor simply connected. In particular, it appears that the basin of attraction of the roots consists of infinitely many (quasi) self-similar disconnected sets (zooms on the left), and infinitely many (quasi) self-similar ``holes'' corresponding to basins of attraction of the two-cycle $\{0,\infty\}$ (zooms on the right).}
  \label{fig:holes_zoom_128}
\end{figure}
\begin{figure}
  \includegraphics[width=\textwidth]{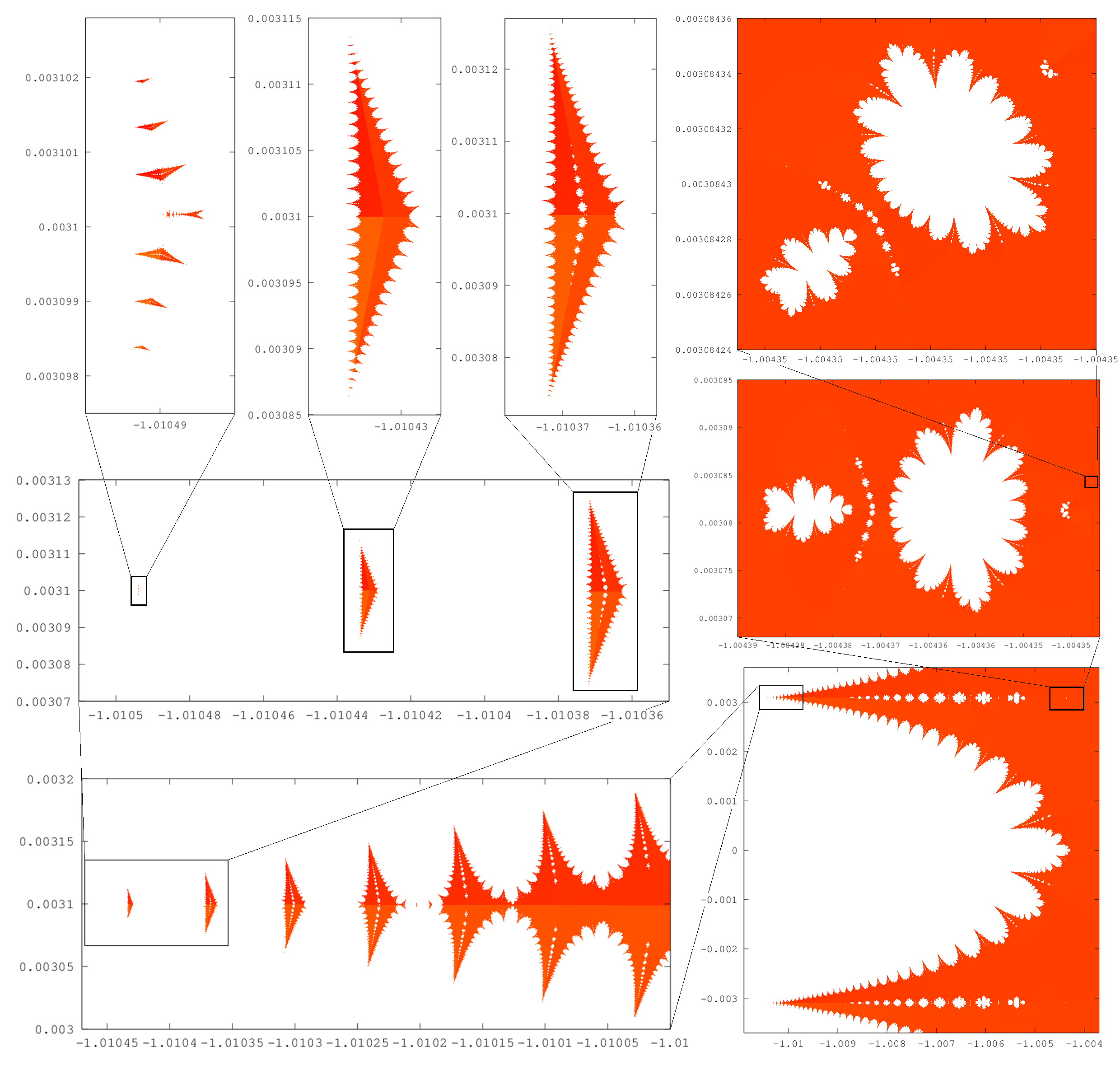}
  \caption{Several consecutive zooms into two parts of the boundary for $n=1024$. Much more structure and disconnectedness becomes visible compared to the case with $n=128$ (Fig.~\ref{fig:holes_zoom_128}). Note how the regions of convergence to the two-cycle $\{0,\infty\}$ (in white) extend through the ``gray areas'' $1/r_0<|z|<1/s_0$, and also note the (quasi) self-similarity throughout.}
  \label{fig:holes_zoom_1024}
\end{figure}

\section{Conclusions and outstanding questions}
\label{sec:conclusions}
In the previous sections we have analyzed the behavior of Laguerre's method applied to the polynomials $p_n(z)=z^n-1$ in the extended complex plane and provided computational results demonstrating the interesting behavior of the method. We now have an almost complete understanding of the behavior of the method outside of the two gray areas that contain the boundary of the region of convergence. We concluded that for initial guesses in $\hat{K}_0\cup\hat{K}_1\cup S^1$ the method converges to a root of $p_n$ (Proposition~\ref{prop:convergence}), and for initial guesses in $D_{s_0}\cup E_{s_0}$ the method converges to the two-cycle $\{0,\infty\}$ (Proposition~\ref{prop:0-infty-two-cycle}).

The numerical results indicate that the basin of attraction of the roots and the basin of attraction of the two-cycle share a common boundary, which should then be an invariant set under the Laguerre iteration function \eqref{eq:Lp} and consist only of finite cycles and infinite orbits. We have not pursued this direction in great depth, as it would likely require extending the theory of Julia and Fatou sets \cite{milnor06} to functions that are not rational. Note that the Laguerre iteration function \eqref{eq:Lp} is not rational even if $n$ is even due to the choice of sign in the denominator of the method. We have, however, attempted to find some short cycles, other than those given in Proposition \ref{prop:two-cycle}, numerically in the following way. First, we used the computational software program {\em Mathematica} to generate the contour plots of $\Re\left(L_p^k(z)-z\right)=0$ and $\Im\left(L_p^k(z)-z\right)=0$ in the sector $0<\theta<\pi/n$ (recall the symmetries Propositions \ref{prop:rotational_symmetry}--\ref{prop:inversion_symmetry}), and used the visually discovered points of intersection as initial guesses in root-finding algorithms for $L_p^k(z)-z$. This way we have been able to find some $2$-, $4$-, and $6$-cycles for polynomials of low degrees. In particular, it appears that $2$-cycles in the sector $0<\theta<\pi/n$ only exist for $n\ge10$ with $p_{10}$--$p_{16}$ having one such $2$-cycle each; $p_{17}$--$p_{26}$ having two; $p_{27}$--$p_{38}$ having three, etc. Regarding $4$-cycles, we found two for $n=5,6,7$; four for $n=8$; eight for $n=9$; nine for $n=10$; ten for $n=11$ and $12$, etc. Finally, $6$-cycles appear to start at $n=6$; we found ten of them for $n=6$, twelve for $n=7$, and twenty-three for $n=8$. Not surprisingly, we haven't found any short odd-cycles, which seems reasonable due to the expected behavior of points near $\partial D$ getting mapped close to $\partial E$ and vice versa. We list the found $4$- and $6$-cycles for $n=5,6,7,8$ in Table \ref{tab:cycles}, where all numbers have been computed to $16$ significant digit accuracy.
\begin{table}
  \centering
  \caption{Period-four and period-six cycles in the sectors $0<\theta<\pi/n$ for $n=5,6,7,8$.}
  \begin{tabular}{|c||c|c|}
    \hline
    $n$ & Four-cycles in $0<\theta<\pi/n$ & Six-cycles in $0<\theta<\pi/n$ \\ \hline\hline
    $5$ & $14.76136221056119 + 6.053684491748273i$  &                                            \\ \hline
    $5$ & $13.34758676939078 + 8.758987500188936i$  &                                            \\ \hline\hline
    $6$ & $4.749579144551457 + 1.098207699050568i$  & $4.809680273550060 + 0.2938473062700105i$  \\ \hline
    $6$ & $4.462144769610253 + 2.042313839245265i$  & $4.807845850061632 + 0.5532613795970850i$  \\ \hline
    $6$ &                                           & $4.791479366250020 + 0.7926029359282372i$  \\ \hline
    $6$ &                                           & $4.758607318036074 + 1.041475486533466i$   \\ \hline
    $6$ &                                           & $4.703162128954519 + 1.307935002359033i$   \\ \hline
    $6$ &                                           & $4.660756967199228 + 1.487046196784297i$   \\ \hline
    $6$ &                                           & $4.587335089636846 + 1.730901724312133i$   \\ \hline
    $6$ &                                           & $4.504230549742958 + 1.940512624824430i$   \\ \hline
    $6$ &                                           & $4.394056696857116 + 2.184780604963821i$   \\ \hline
    $6$ &                                           & $4.271115104571219 + 2.408180892551038i$   \\ \hline\hline
    $7$ & $3.102711305833646 + 0.4791536373837452i$ & $3.086956249849817 + 0.09918082640500742i$ \\ \hline
    $7$ & $3.005076854712194 + 1.041210971892816i$  & $3.098674298771984 + 0.2141440467880112i$  \\ \hline
    $7$ &                                           & $3.108949745463634 + 0.2853986438548191i$  \\ \hline
    $7$ &                                           & $3.105161257066959 + 0.3575768153296901i$  \\ \hline
    $7$ &                                           & $3.103107584356313 + 0.4651267058741789i$  \\ \hline
    $7$ &                                           & $3.096111405599539 + 0.5516620616566748i$  \\ \hline
    $7$ &                                           & $3.104755891427699 + 0.6231011218949135i$  \\ \hline
    $7$ &                                           & $3.069516544423272 + 0.7787753426450393i$  \\ \hline
    $7$ &                                           & $3.013630591486307 + 1.011069152189774i$   \\ \hline
    $7$ &                                           & $2.988317222029729 + 1.090657585339925i$   \\ \hline
    $7$ &                                           & $2.947434684423189 + 1.193708033582317i$   \\ \hline
    $7$ &                                           & $2.917334700654045 + 1.353477443986847i$   \\ \hline\hline
    $8$ & $2.452675491472578 + 0.2785857657502200i$ & $2.424861149787357 + 0.04867681760903371i$ \\ \hline
    $8$ & $2.475125064051807 + 0.4260510136323984i$ & $2.433534405337240 + 0.07300790020468901i$ \\ \hline
    $8$ & $2.419209618854812 + 0.6711649955368295i$ & $2.436567623533557 + 0.1147077080457789i$  \\ \hline
    $8$ & $2.403834152721369 + 0.9295918014456642i$ & $2.443481963307878 + 0.1425019298788600i$  \\ \hline
    $8$ &                                           & $2.453807739243897 + 0.1734183047708730i$  \\ \hline
    $8$ &                                           & $2.448811352673064 + 0.2130028821893788i$  \\ \hline
    $8$ &                                           & $2.452347914830289 + 0.2731330464450077i$  \\ \hline
    $8$ &                                           & $2.452422308102194 + 0.3140142922891776i$  \\ \hline
    $8$ &                                           & $2.460907150206826 + 0.3413782443508301i$  \\ \hline
    $8$ &                                           & $2.475788631367756 + 0.3743161467144843i$  \\ \hline
    $8$ &                                           & $2.478533197214976 + 0.3940810991477981i$  \\ \hline
    $8$ &                                           & $2.474834833464490 + 0.4192210929378601i$  \\ \hline
    $8$ &                                           & $2.452717379844995 + 0.4708078573787699i$  \\ \hline
    $8$ &                                           & $2.445815983913505 + 0.5121550600972321i$  \\ \hline
    $8$ &                                           & $2.446892544221102 + 0.5361638213148769i$  \\ \hline
    $8$ &                                           & $2.421772891898377 + 0.6579277753771876i$  \\ \hline
    $8$ &                                           & $2.413249889360346 + 0.6953853836452080i$  \\ \hline
    $8$ &                                           & $2.395254878356101 + 0.7555517867904204i$  \\ \hline
    $8$ &                                           & $2.393829791477975 + 0.8012023541812359i$  \\ \hline
    $8$ &                                           & $2.395554660155073 + 0.8271069757313417i$  \\ \hline
    $8$ &                                           & $2.405037612365332 + 0.9182940336703574i$  \\ \hline
    $8$ &                                           & $2.396106513659878 + 0.9454683171801179i$  \\ \hline
    $8$ &                                           & $2.394665948621691 + 0.9684820140915532i$  \\ \hline
  \end{tabular}
  \label{tab:cycles}
\end{table}

Many questions remain. What is the shape of the boundary of the region of convergence? We see in Fig.~\ref{fig:basins} that the boundary appears to track $\partial D$ and $\partial E$, but it does not coincide with these sets. The boundary is fractal (Figs.~\ref{fig:fractal_bdry} and \ref{fig:quasi-ss-bdry}) and, moreover, has many other components in the annuli determined by $r_0$ and $s_0$ (Figs.~\ref{fig:holes_zoom_128} and \ref{fig:holes_zoom_1024}). It may be of interest to see whether a fractal dimension of the boundary has a simple dependence on $n$. We speculate that the dimension might grow from $1$ to $2$ as $n$ increases from $5$ to $\infty$, but we have not pursued this idea further.

It would also be interesting to see if other families of polynomials exhibit similar features to those observed for $p_n(z)=z^n-1$. In particular, what determines the size and shape of the regions of convergence to the roots? Is it due to the symmetry of the roots that the measure of the regions of convergence tends to zero? If so, would other symmetric arrangements of the roots yield similar results? Perhaps the questions should be reversed. Are there families of polynomials for which Laguerre's method converges to a root except if starting from a set of zero measure? If so, what are they? We intend to look into some of these questions in future work.

%
%

We conclude with the following interesting observation. The fact that the method theoretically converges only in the small annulus in the neighborhood of the unit circle $S^1$ suggests that Laguerre's method is unsuitable practically and raises a valid concern for general polynomials. On the other hand, when the method is implemented in its general formulation \eqref{eq:laguerre} and applied to polynomials $p_n(z)=z^n-1$, the resulting image of the basins of attraction may look like Fig.~\ref{fig:basin_8_chaos}, in which the polynomial $p_8(z)=z^8-1$ is used and the basins of attraction are computed on a $1000\times1000$ grid of points.
\begin{figure}
  \includegraphics[width=0.32\textwidth]{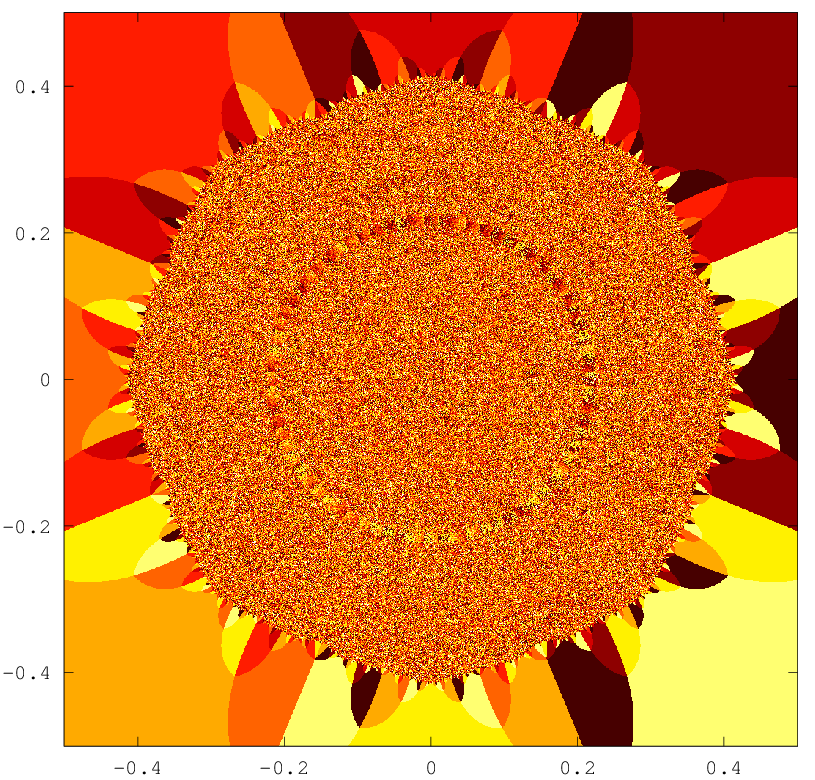}
  \hfill
  \includegraphics[width=0.32\textwidth]{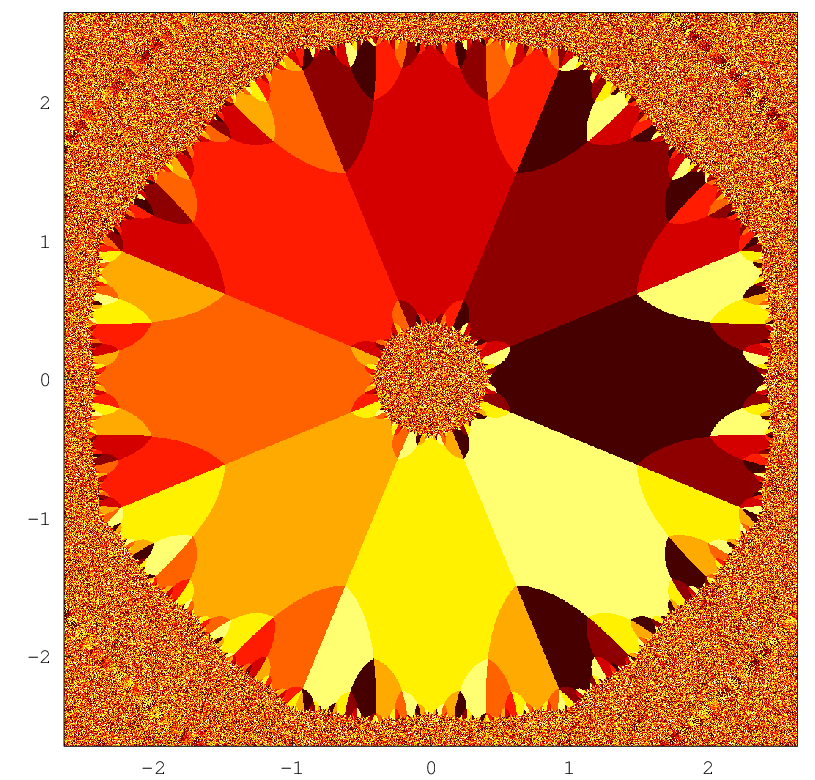}
  \hfill
  \includegraphics[width=0.32\textwidth]{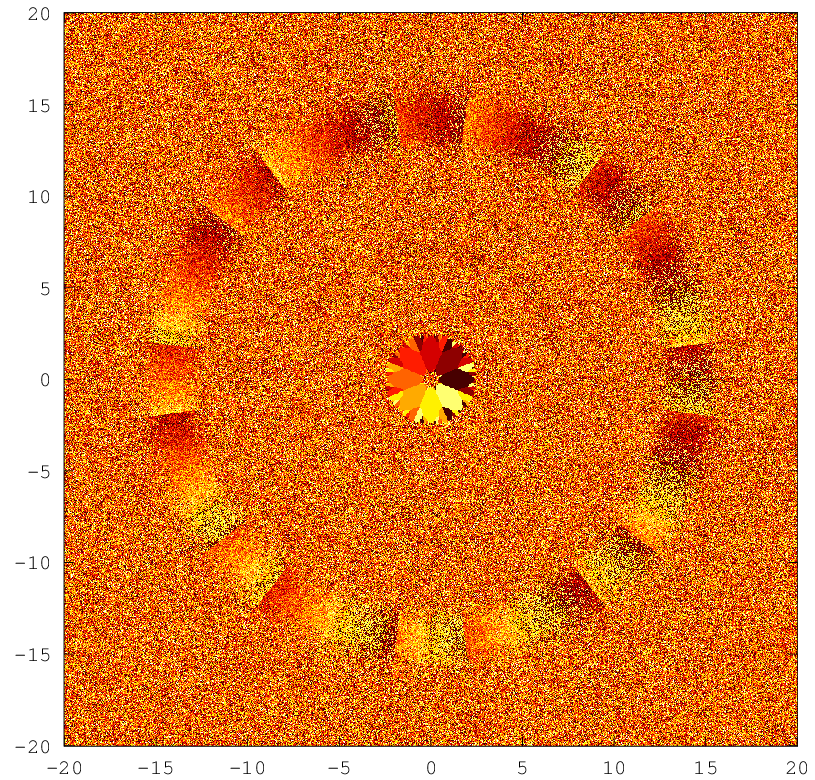}
  \caption{Three levels of zoom into the computed basin of attraction for $p_8(z)=z^8-1$ using the general formulation of the method \eqref{eq:laguerre}. If the iterations are allowed to run to convergence (or a prescribed maximum number of iterations, $100$ in this computation), we observe chaotic convergence to roots even for initial guesses starting in the region of theoretical convergence to the two-cycle $\{0,\infty\}$ (compare to Fig.~\ref{fig:basins}, where the basin of attraction of $\{0,\infty\}$ is colored white). This behavior is due to the loss of significance in the computation. The middle figure is the same as in Fig.~\ref{fig:basins} (upper right), the left figure is a zoom into the center part of the middle figure, and the right figure is a zoom out to a $40\times40$ square.}
  \label{fig:basin_8_chaos}
\end{figure}
Note that visually the method converges from any point in the displayed squares, which is not the case when the formulation \eqref{eq:Lp} is used. The reason for this behavior is the loss of significance in the computation of the expression $(n-1)^2\left(p'(z)\right)^2-n(n-1)p(z)p''(z)$ in the denominator of \eqref{eq:laguerre}. Note that both terms in the difference have leading terms $n^2(n-1)^2z^{2n-2}$, and the actual difference should be equal to $n^2(n-1)^2z^{n-2}$. We therefore see that, for large $|z|$, significant errors will occur in the computation of the square root in \eqref{eq:laguerre}. In fact, the relative error in the computation of the square root is roughly proportional to $\sqrt{\varepsilon(1+|z|^n)}$, where $\varepsilon$ is the machine epsilon, so with $n=8$ and the usual $64$-bit double precision, the relative error is on the order of $1$ with $|z|$ as small as $100$. We note that the loss of significance will occur for any polynomial $p(z)$ of degree $n$ and $|z|$ large enough, since for a general polynomial of degree $n$ the difference $(n-1)^2\left(p'(z)\right)^2-n(n-1)p(z)p''(z)$ will have a leading term of order $z^{2n-4}$, two orders of magnitude smaller than the leading terms  of $(n-1)^2\left(p'(z)\right)^2$ and $n(n-1)p(z)p''(z)$. Perhaps this observation helps explain the popular notion that Laguerre's method seems to converge to a root from almost any initial guess.

\bibliography{laguerre}
\bibliographystyle{abbrv}

\end{document}